\newtheoremstyle{nonum}{}{}{\itshape}{}{\bfseries}{.}{ }{\thmnote{#3}}
\newtheorem{thm}{Theorem}[section]
\newtheorem*{thm*}{Theorem}
\newtheorem*{prop*}{Proposition}
\newtheorem{cor}[thm]{Corollary}
\newtheorem{lem}[thm]{Lemma}
\newtheorem{rem}[thm]{Remark}
\newtheorem{exm}[thm]{Example}
\newtheorem{fact}[thm]{Fact}
\theoremstyle{nonum}
\newcommand{\R}{\mathbb R}
\def\K{{\cal K}}
\newcommand{\iprod}[2]{\langle #1,#2 \rangle} 
\newcommand{\E}{{\cal E}}
\def\K{{\cal K}}
\begin{document}
\title{Identifying set inclusion by projective positions and mixed volumes}
\date{}
\author{D.I. Florentin, V. D. Milman and A. Segal\\
		Tel Aviv University}
\maketitle
\begin{abstract}
We study a few approaches to identify inclusion (up to a shift) between
two convex bodies in $\R^n$. To this goal we use mixed volumes and
fractional linear maps. We prove that inclusion may be identified
by comparing volume or surface area of all projective positions of
the sets. We prove similar results for Minkowski sums of the sets.
\end{abstract}

\section{Introduction and Results}
Set inclusion $A\subseteq B$ of two convex bodies (elements of $\K^n$,
namely compact convex non degenerate sets), implies that for every
monotone functional $f:\K^n\to\R$, one has by definition, $f(A)\le f(B)$.
For example, for the volume functional we have $|A|\le|B|$. Our goal is
to achieve a reverse implication: describing a family $\cal{F}$ of such
functionals, with the property of {\em identifying inclusion}, that is,
given $A,B\in \K^n$, if $f(A)\le f(B)$ for all $f\in{\cal F}$, then
$A\subseteq B$ (or more generally, $B$ contains a translate of $A$). Note
that if a family of functionals $\cal{F}$ identifies inclusion, then it
{\em separates elements} in $\K^n$, that is, if $f(A)=f(B)$ for all $f\in
\cal{F}$, then $A=B$ (or more generally, $B$ is a translate of $A$). The
converse, however, is not true in general. That is, some families
separate points but do not identify inclusion. For example, see the
theorem by Chakerian and Lutwak below.

R. Schneider showed in \cite{Schneider_Mixed} that a convex body is
determined, up to translation, by the value of its mixed volumes with
some relatively small family of convex bodies. An extension to this
fact was given by W. Weil in the same year:
\begin{thm*}[Weil,\cite{Weil}] Let $A,B \in \K^n$. Then $B$ contains
a translate of $A$ if and only if for all $K_2, \ldots, K_n \in \K^n$:
$$ V(A, K_2, \ldots,  K_n) \le V(B, K_2, \ldots, K_n),$$  where
$V(K_1, \dots, K_n)$ denotes the $n$-dimensional mixed volume.
\end{thm*}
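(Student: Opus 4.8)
The \emph{only if} direction is immediate: if $A+t\subseteq B$ for some $t\in\R^n$, then, since mixed volumes are translation invariant and monotone in each slot, $V(A,K_2,\ldots,K_n)=V(A+t,K_2,\ldots,K_n)\le V(B,K_2,\ldots,K_n)$ for all $K_2,\ldots,K_n$. So the content is the converse, and the plan is to convert the mixed-volume inequalities into a single inequality between support functions through the integral representation of mixed volumes, and then to extract the translation vector by a Hahn--Banach duality argument.

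First I would specialize the hypothesis to the diagonal $K_2=\cdots=K_n=K$. Writing $S_K$ for the surface area measure of $K$ and using $V(A,K,\ldots,K)=\frac1n\int_{\Sph^{n-1}}h_A\,dS_K$, the hypothesis becomes $\int_{\Sph^{n-1}}(h_B-h_A)\,dS_K\ge 0$ for every convex body $K$. Put $g:=h_B-h_A\in C(\Sph^{n-1})$. By Minkowski's existence theorem the measures $S_K$ are exactly the finite Borel measures on $\Sph^{n-1}$ that are centered, $\int u\,dS_K(u)=0$, and not concentrated on a great subsphere. Regularizing an arbitrary centered nonnegative measure $\mu$ as $\mu+\eps\sigma$ (with $\sigma$ the centered uniform measure) yields a surface area measure for each $\eps>0$; letting $\eps\to 0$ and using continuity of $g$, the hypothesis upgrades to $\int_{\Sph^{n-1}}g\,d\mu\ge 0$ for \emph{every} nonnegative measure $\mu$ on $\Sph^{n-1}$ with $\int u\,d\mu(u)=0$.

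The heart of the matter is then a dual reformulation. Let $L\subseteq C(\Sph^{n-1})$ be the finite-dimensional space of restrictions of linear functions $u\mapsto\iprod{t}{u}$, and let $C_+$ be the cone of nonnegative continuous functions; note that $C_++L$ is exactly the set of functions dominating some linear function, and that it is closed (a uniform bound on such a function bounds the admissible $t$). I claim $g\in C_++L$. If not, Hahn--Banach strictly separates $g$ from this closed convex cone by a signed measure $\mu$, giving $\int g\,d\mu<0\le\int h\,d\mu$ for all $h\in C_++L$; testing against $\pm\ell$ for $\ell\in L$ forces $\int u\,d\mu=0$, and testing against $C_+$ forces $\mu\ge 0$, so the previous paragraph yields $\int g\,d\mu\ge 0$, a contradiction. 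Hence $g\ge\iprod{t}{\cdot}$ for some $t$, i.e.\ $h_B\ge h_A+\iprod{t}{\cdot}=h_{A+t}$, which is precisely $A+t\subseteq B$. (Incidentally, this shows that the diagonal inequalities alone already force inclusion.)

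The main obstacle is exactly this duality step: abstractly the relevant cone is $M_+\cap L^\perp$ (nonnegative measures meeting the annihilator of $L$), whose dual cone is $\overline{C_++L}$, and the closure operation in the infinite-dimensional space of measures is what needs care. I sidestep it by running the separation directly, using only that $C_++L$ is a closed convex cone and that, by the Riesz representation theorem, the dual of $C(\Sph^{n-1})$ is the space of signed measures. The one other point deserving attention is the approximation in the second step --- checking that every centered nonnegative measure is a weak limit of surface area measures --- which guarantees that the continuous function $g$ sees the inequality across the full cone.
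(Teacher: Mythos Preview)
The paper does not give its own proof of this theorem: it is quoted as a known result of Weil and used only as background, with the later, sharper version of Lutwak (simplices suffice) likewise cited without proof. So there is no in-paper argument to compare against.

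That said, your argument is correct and is essentially the classical route. The specialization to the diagonal $K_2=\cdots=K_n=K$ and the formula $V(A,K[n-1])=\tfrac1n\int h_A\,dS_K$ reduce the hypothesis to $\int g\,dS_K\ge0$ for $g=h_B-h_A$; the regularization $\mu\mapsto\mu+\eps\sigma$ together with Minkowski's existence theorem is exactly the right way to pass from surface area measures to all centered nonnegative measures; and your Hahn--Banach step is clean --- the key point, which you handle, is that $C_++L$ is a \emph{closed} convex cone in $C(\Sph^{n-1})$ (boundedness of a function in $C_++L$ forces a bound on the linear part, so limits stay in the cone), after which Riesz identifies the separating functional with a measure that must be nonnegative and centered, giving the contradiction. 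Your parenthetical remark that only the diagonal inequalities are needed is also correct and sits between Weil's statement and Lutwak's refinement cited in the paper.
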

Actually, it is possible to reduce even further the information on the bodies
$A$ and $B$, as follows from the result of Lutwak \cite{Lutwak} which we discuss in
Section \ref{Sec_Sums}.

We will investigate a few approaches to achieve the same goal. First we
use a family of transformations on $\R^n$. We examine two such families
in this note: the group of affine transformations $AF_n$, and the group
of the far less explored fractional linear (or projective) transformations
$FL_n$. For example we may consider ${\cal F}_L=\{f\circ T:T\in AF_n \}$,
where $f$ is the volume functional, and $T$ is considered as a map on
$\K^n$. Unfortunately, the affine structure respects volume too well,
that is, if $|A|\le |B|$, then for every $T\in AF_n$ we have $|TA|\le
|TB|$ as well. In other words, the action of the affine group is not rich
enough to describe inclusion. However, it turns out that the larger family
${\cal F}_P=\{f\circ T:T\in FL_n \}$ is identifying inclusion. We will
consider replacing the volume functional by a mixed volume, as well as
replacing the family of transformations by different operations (such as
Minkowski sums with arbitrary bodies). We would like to emphasize that
the proofs we present in this note are not very sophisticated. However,
they point to some directions which Convexity Theory did not explore
enough, and lead to new and intriguing questions.

Let us introduce a few standard notations. First, for convenience we will
fix an Euclidean structure and some orthonormal basis $\{e_i\}_1^n$. For
a vector $x \in \R^n$ we will often write $x = (x_1, \ldots, x_n) =
\sum_1^n x_i e_i$. Given a subspace $E \subset \R^n$, $P_E$ will denote
the orthogonal projection onto $E$. We denote by $D_n$ the $n$-dimensional
Euclidean ball and, for $1\leq i \leq n$, by $W_{n-i}(K)$ the quermassintegral:$$
W_{n-i}(K)=V(K[i], D_n[n-i]).$$
For further definitions and well known properties
of mixed volumes and quermassintegrals see \cite{Schneider_Book}.
We denote the support function of $K$ by:
$$ h_K(u) = \sup_{x\in K}{\iprod{x}{u}}. $$
Let us recall the definition 
of fractional linear maps. We identify $\R^n$
with a subset of the projective space $RP^n$, by fixing some point
$z\in \R^{n+1}\setminus\{0\}$, and considering the affine subspace
$E_n=\{x|\iprod{z}{x}=1\} \subset\R^{n+1}$. Every point in $E_n $
corresponds to a unique line in $R^{n+1}$ passing through the origin. A
regular linear transformation $\tilde{L}:\R^{n+1}\to\R^{n+1}$ induces an
injective map $L$ on $RP^n$. A fractional linear map is the restriction
of such a map to $E_n\cap L^{-1}(E_n)$. The maximal (open) domain
$Dom(F)$, of a non-affine fractional linear map $F$ is $\R^n\setminus H$,
for some affine hyperplane $H$. Since our interest is in convex sets,
we usually consider just one side of $H$ as the domain, i.e. our maps are
defined on half spaces. They are the homomorphisms of convexity, in the
sense that there are no other injective maps that preserve convexity of
every set in their domain. The big difference, compared to linear maps,
is that the Jacobian matrix is not constant, and its determinant is not
bounded (on the maximal domain). In Section \ref{Sec_FL+Quermass} we show
that $\mathcal{F}_P$ is an identifying family:

\begin{thm}\label{Thm_Vol-FL}
Let $n\ge1$, and $A,B\in\K^n$. If for every admissible $F\in FL_n$,
one has $$Vol(FA)\le Vol(FB),$$ then $A\subseteq B$.
\end{thm}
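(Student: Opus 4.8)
The plan is to exploit fractional linear maps to amplify the volume of the part of $A$ lying outside $B$, driving the volume ratio to infinity unless $A\subseteq B$ (up to the required translate). The key feature of non-affine $F\in FL_n$, as emphasized in the excerpt, is that the Jacobian determinant is \emph{unbounded} on the maximal domain: as one approaches the excluded hyperplane $H$, the local volume distortion blows up. The contrapositive is the natural target: assuming $A\not\subseteq B$, I want to produce an admissible $F$ with $Vol(FA)>Vol(FB)$.

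\textbf{Setting up the contrapositive.} Suppose $A\not\subseteq B$. Then there is a point $p\in A\setminus B$, and since $B$ is closed and convex, a hyperplane strictly separating $p$ from $B$: there is a functional and a level so that a neighborhood $U$ of $p$ in $A$ has positive volume and lies strictly on the far side of $B$ relative to some affine hyperplane $H_0$. The idea is to place the ``pole'' hyperplane $H$ of the fractional linear map close to $B$ but on the near side, so that $B$ stays in the region of bounded, in fact small, Jacobian, while the chunk $U\subset A$ sits in the region where $F$ has exploding Jacobian. Concretely, a one-parameter family $F_t$ of fractional linear maps can be chosen whose pole hyperplane $H_t$ approaches a supporting hyperplane of $B$ from outside $B$, keeping all of $B$ at bounded distance (so $Vol(F_tB)$ stays bounded) while $U$ is swept toward $H_t$ so that $Vol(F_tU)\to\infty$. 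Then for $t$ large enough $Vol(F_tA)\ge Vol(F_tU)>Vol(F_tB)$, contradicting the hypothesis.

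\textbf{Executing the volume estimates.} After a preliminary affine normalization (harmless since $AF_n\subset FL_n$ and the hypothesis is stable under it), I would take the maximal domain of $F_t$ to be a half-space $H_t^+$ containing both $A$ and $B$, with $H_t$ converging to a chosen hyperplane tangent to $B$ and cutting through the separating slab so that $U$ lies between $H_t$ and $B$. For a fractional linear map written in coordinates, the Jacobian determinant has the form $c\,|\iprod{z}{x}+d|^{-(n+1)}$ for the appropriate $z,d$, so $Vol(F_tK)=\int_K c\,|\iprod{z}{x}+d|^{-(n+1)}\,dx$. The linear functional $\iprod{z}{\cdot}+d$ vanishes exactly on $H_t$; hence the integrand is uniformly bounded on $B$ (which stays away from $H_t$) and becomes arbitrarily large on $U$ (which approaches $H_t$). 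Carefully arranging the geometry so that the lower bound from $U$ beats the upper bound from $B$ is the crux of the computation.

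\textbf{The main obstacle} is ensuring admissibility and the translate subtlety simultaneously. First, the map $F_t$ must remain admissible, i.e. $A\cup B$ must stay strictly inside a single half-space on which $F_t$ is defined and injective; as $H_t$ closes in on $B$ this requires that $H_t$ separate the exploding region (containing $U$) from $B$ without ever cutting $B$ itself, which constrains how the family can be steered. Second, the theorem only claims $A\subseteq B$, whereas Weil-type statements produce containment only up to translation; I expect the honest statement to be containment up to a shift, and one must check that the volume-blowup argument actually forces genuine inclusion rather than merely inclusion of a translate. I would resolve this by noting that if no translate of $A$ fits in $B$ the separation can be made uniform over the relevant positions, but pinning down that the map detects untranslated inclusion — rather than the weaker translated version — is the delicate point where the proof must be handled with care.
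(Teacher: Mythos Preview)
Your overall strategy---contrapositive plus Jacobian blow-up---is exactly the paper's, which packages it as Lemma~\ref{Lem_FL-Diverg}: replace $A\setminus B$ and $B$ by disjoint balls, push the defining hyperplane of $F_0$ toward the ball inside $A$, and compute directly with Fact~\ref{Fact_FL-of-Balls}. Two points in your write-up need fixing, however.

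First, your geometric setup is inverted. You let $H_t$ ``approach a supporting hyperplane of $B$ from outside $B$, keeping all of $B$ at bounded distance.'' These two clauses contradict each other: if $H_t$ tends to a tangent hyperplane of $B$, then $\mathrm{dist}(B,H_t)\to 0$, so $\int_B |\langle z,x\rangle+d|^{-(n+1)}dx\to\infty$ and $Vol(F_tB)$ blows up rather than staying bounded. The correct placement is the opposite one: $H_t$ should approach a supporting hyperplane of $A$ near the point $p\in A\setminus B$, from \emph{outside} $A$. Then $U$ lies between $H_t$ and $B$ (as you also say later), $\mathrm{dist}(U,H_t)\to 0$, while $\mathrm{dist}(B,H_t)$ is bounded below by roughly $\mathrm{dist}(p,B)>0$. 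That is the configuration in the paper's Lemma~\ref{Lem_FL-Diverg}, with $D_K=\mathcal{E}_{1,1,\delta}$ at distance $\delta\to 0$ from the pole and $D_T=\mathcal{E}_{R,R,d}$ at distance $d>2$.

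Second, your ``main obstacle'' about translates is a red herring. Unlike the mixed-volume hypotheses in Weil's or Lutwak's theorems, the condition $Vol(FA)\le Vol(FB)$ for all admissible $F\in FL_n$ is \emph{not} translation invariant: if $A=B+x_0$ with $x_0\neq 0$, then $A\not\subseteq B$ (they have equal volume), and the very argument above produces an $F$ with $Vol(FA)>Vol(FB)$. So the contrapositive yields genuine inclusion $A\subseteq B$, exactly as stated; there is nothing to repair. Once you correct the placement of $H_t$, admissibility is automatic (both $A$ and $B$ sit strictly on one side of $H_t$), and the proof goes through without further subtlety.
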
 When we say that $F$ is admissible we mean that $A,B\subset Dom(F)$.
Theorem \ref{Thm_Vol-FL} is a particular case of the following fact,
where the volume is replaced by any of the quermassintegrals:
\begin{thm}\label{Thm_iVol-FL}
Let $n\geq 2$, $A,B\in\K^n$, and fix $1\le i\le n$. If for every admissible
$F\in FL_n$ one has $$W_{n-i}(FA)\le W_{n-i}(FB),$$ then $A\subseteq B$.
\end{thm}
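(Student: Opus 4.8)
The plan is to argue by contradiction, extending the strategy behind the volume case (Theorem \ref{Thm_Vol-FL}, which is the instance $i=n$) by means of a degenerating one--parameter family of projective maps. First I would record the elementary reformulation that $A\not\subseteq B$ holds exactly when $h_A(\theta)>h_B(\theta)$ for some unit vector $\theta$. Fixing such a $\theta$ and writing $h=h_A(\theta)$, I would use the admissible maps
$$F_t(x)=\frac{x}{t-\iprod{x}{\theta}},\qquad t>h,$$
whose pole hyperplane $\{\iprod{x}{\theta}=t\}$ slides down to the supporting hyperplane $\{\iprod{x}{\theta}=h\}$ of $A$ as $t\to h^{+}$. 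Since $t>h> h_B(\theta)$, both $A$ and $B$ lie in the open half--space $Dom(F_t)$, so every $F_t$ is admissible and the hypothesis gives $W_{n-i}(F_tA)\le W_{n-i}(F_tB)$.

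The whole proof then rests on an asymmetry between the two images as $t\to h^{+}$. Since $B$ stays at distance at least $h-h_B(\theta)>0$ from the pole, $F_t$ is uniformly bounded on $B$ and the bodies $F_tB$ remain inside a fixed ball; by monotonicity of mixed volumes $W_{n-i}(F_tB)$ is therefore bounded. The heart of the matter is the complementary claim that $W_{n-i}(F_tA)\to\infty$, which at once contradicts $W_{n-i}(F_tA)\le W_{n-i}(F_tB)$ for $t$ close to $h$. To prove the blow--up I would combine the nondegeneracy of $A$ with the fact (noted in the introduction) that fractional linear maps carry segments to segments, hence convex hulls to convex hulls, on their domain. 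Placing the origin at an interior point of $A$, so that $h>0$, choose a ball $rD_n\subseteq A$ with $r<h$ and a point $p\in A$ attaining $\iprod{p}{\theta}=h$; then $A\supseteq\co\big(rD_n\cup\{p\}\big)$ and hence
$$F_tA\supseteq\co\big(F_t(rD_n)\cup\{F_t(p)\}\big).$$
As $t\to h^{+}$ the body $F_t(rD_n)$ converges to a fixed nondegenerate body, so it contains a fixed ball $B(z,\rho)$, while $F_t(p)=p/(t-h)$ recedes to infinity. Thus $F_tA$ contains an ``ice--cream cone'' $\co\big(B(z,\rho)\cup\{F_t(p)\}\big)$, which in turn contains a cylinder of fixed transverse radius $\rho/2$ and length tending to infinity.

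It remains to check that such an elongating cylinder has all of its quermassintegrals tending to infinity. Writing the cylinder as $\ell+D'$ with $\ell$ an axis segment of length $L$ and $D'$ a fixed $(n-1)$--dimensional transverse ball, I would expand $V\big((\ell+D')[i],D_n[n-i]\big)$ multilinearly and keep the single nonnegative term pairing one copy of $\ell$ with $i-1$ copies of $D'$ and $n-i$ copies of $D_n$; the spanning condition for positivity of this mixed volume holds because $\ell$ contributes its axis direction while the $D'$ and $D_n$ factors supply the remaining $n-1$ transverse directions. By homogeneity this term is a positive constant times $L$, so $W_{n-i}$ of the cylinder grows at least linearly in $L$, and by monotonicity $W_{n-i}(F_tA)\to\infty$, completing the contradiction.

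The step I expect to be the main obstacle is precisely this blow--up, and specifically making it uniform in $i$ and independent of the local shape of $\partial A$ at the support point. A tempting worry is that $F_tA$ might collapse to a thin spike, which would force only the mean--width term $W_{n-1}$ to diverge while leaving the higher quermassintegrals bounded; the resolution is that the nondegeneracy of $A$ together with convex--hull preservation produces a genuinely $n$--dimensional elongating cone rather than a spike, and it is the inscribed long cylinder — analysed through monotonicity and the nonnegativity of mixed volumes — that forces every $W_{n-i}$ to diverge.
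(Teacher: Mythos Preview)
Your proof is correct. Both your argument and the paper's rest on the same core idea: assuming $A\not\subseteq B$, push the pole hyperplane of an admissible fractional linear map toward the part of $A$ lying outside $B$, so that the image of $A$ becomes unboundedly large while the image of $B$ stays bounded. The executions, however, differ. The paper first isolates (Remark~\ref{Rem_Replace-By-Balls}) a small ball $D_A\subset A$ on the far side of a separating hyperplane from a large ball $D_B\supset B$, and then shows via the canonical form $F_0$ and an explicit computation on balls (Fact~\ref{Fact_FL-of-Balls}, Lemma~\ref{Lem_FL-Diverg}) that one can arrange $F(B)\subset\varepsilon D_n$ while $F(A)$ contains a translate of $D_n$; the inequality $W_{n-i}(FA)\le W_{n-i}(FB)$ is then violated immediately by monotonicity and homogeneity, with no argument depending on $i$. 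You instead anchor a ball $rD_n$ at an interior point of $A$ (which does \emph{not} blow up under $F_t$, since it stays uniformly away from the pole) together with a support point $p\in\partial A$ (which does), so that $F_tA$ only contains a long ice-cream cone rather than a large round ball; this forces the additional --- correct --- step of expanding $W_{n-i}$ of a cylinder and invoking the positivity criterion for mixed volumes to see linear growth in the length. Your route is more explicit about the one-parameter family $F_t$ and avoids the canonical-form bookkeeping; the paper's route packages the geometry into a single stronger lemma (large ball inside $F(A)$, tiny ball around $F(B)$) that instantly handles every $i$ and, more generally, any monotone translation-invariant homogeneous functional.
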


The proof of Theorem \ref{Thm_iVol-FL} is based on the non boundedness
of the Jacobians of admissible fractional linear maps. That is, if
$A \setminus B$ is of positive volume, we may choose an admissible
$F\in FL_n$ such that $FA$ exceeds $FB$ in volume or, say, surface area,
regardless of how small $|A \setminus B|$ is. The exact formulation is
given in Lemma \ref{Lem_FL-Diverg}.

Had we considered only $F\in FL_n$ which are affine in Theorem \ref{Thm_Vol-FL},
clearly the conclusion could not have been reproduced (since $|FA|=
\det(F)|A|$, we in fact only assume that $|A|\le |B|$). One may
ask the same question in the case of surface area, namely $i=n-1$. Since
the surface area is not a linear invariant, not even up to the determinant
(as in the case of volume), the answer is not trivial, but it does follow
immediately (along with the restriction to $n\ge3$) from the negative
answer to Shephard's problem. In \cite{Petty,Schneider_Shep} Petty and
Schneider showed:
\begin{thm*}[Petty, Schneider]
Let $n\ge3$. Then there exist centrally symmetric bodies $A,B\in\K^n$,
such that $Vol(A)>Vol(B)$, and yet for every $n-1$ dimensional space $E$,
$Vol_{n-1}(P_E A)\le Vol_{n-1}(P_E B)$. In particular, $A\not\subseteq B$.
\end{thm*}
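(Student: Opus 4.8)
The plan is to reformulate the projection hypothesis in terms of projection bodies and the cosine transform, isolate the precise obstruction to an affirmative answer, and then build the counterexample out of a body that fails to be a zonoid. For a centrally symmetric $K$ I would encode all of its hyperplane projections in one convex body, the projection body $\Pi K$, whose support function is
$$ h_{\Pi K}(u) = Vol_{n-1}(P_{u^\perp}K) = \tfrac12\int_{S^{n-1}} |\iprod{u}{v}|\, dS_K(v), $$
where $S_K$ is the surface area measure of $K$. Thus the hypothesis ``$Vol_{n-1}(P_E A)\le Vol_{n-1}(P_E B)$ for every hyperplane $E$'' is exactly $\Pi A\subseteq\Pi B$, and the map $K\mapsto h_{\Pi K}$ is, up to a constant, the cosine transform $\mathcal C$ applied to $S_K$. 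The key analytic input is the diagonalization of $\mathcal C$ in spherical harmonics: its eigenvalues $\lambda_{2k}$ are nonzero but change sign (already $\lambda_4<0$), so $\mathcal C$ is far from positive, and there exist symmetric bodies whose surface area measure is not $\mathcal C$ of a nonnegative measure. These are precisely the bodies that are not projection bodies (non-zonoids), and they exist only for $n\ge3$ — in the plane $\Pi$ is essentially a $90^\circ$ rotation, which is exactly why the statement requires $n\ge3$.

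Next I would record the affirmative half, which pins down where a counterexample must live. If $B$ is itself a projection body, write $h_B(u)=\int_{S^{n-1}}|\iprod{u}{v}|\,d\mu(v)$ with $\mu\ge0$; then Fubini gives
$$ n\,Vol(B) = \int_{S^{n-1}} h_B\, dS_B = 2\int_{S^{n-1}} h_{\Pi B}\,d\mu \ge 2\int_{S^{n-1}} h_{\Pi A}\,d\mu = \int_{S^{n-1}} h_B\, dS_A = n\,V(A[n-1],B), $$
and Minkowski's first inequality $V(A[n-1],B)\ge Vol(A)^{(n-1)/n}Vol(B)^{1/n}$ forces $Vol(A)\le Vol(B)$. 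Hence whenever $\Pi A\subseteq\Pi B$ fails to imply $Vol(A)\le Vol(B)$, the body $B$ cannot be a projection body. The strategy is to run this implication in reverse: start from a symmetric $B$ that is \emph{not} a projection body, and exploit the negative part of the signed density $g_B$ solving $h_B=\mathcal C g_B$ to manufacture a competitor $A$ whose projections shrink precisely on the region where $g_B<0$; through $n\,V(A[n-1],B)=2\int h_{\Pi A}\,g_B$ this pushes the mixed volume, and with it $Vol(A)$, above $Vol(B)$ while keeping $\Pi A\subseteq\Pi B$. To make this concrete I would take $B$ to be a body of revolution, so that every function on $S^{n-1}$ depends on a single angular variable and the problem collapses to one-dimensional integrals: choose a smooth rotationally symmetric profile whose cosine-transform density is genuinely sign-indefinite, take $A$ to be a suitably tuned spheroid, and verify $h_{\Pi A}\le h_{\Pi B}$ and $Vol(A)>Vol(B)$ directly.

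The hard part will be the construction of $A$, for two linked reasons. First, one is not free to prescribe $h_{\Pi A}$: it must lie in the range of $\Pi$, i.e. be the support function of an honest projection body of a genuine convex body, so the downward modification on $\{g_B<0\}$ must be realized within the class of zonoids while keeping $A$ convex. Second — and this is the real subtlety — the effect is intrinsically non-local: a first-order analysis near a ball shows that for bodies close to a ball the inclusion $\Pi A\subseteq\Pi B$ already forces $Vol(A)\le Vol(B)$, since a mean-zero function cannot be pointwise nonnegative and this annihilates the available first-order freedom. No small perturbation of a ball can therefore serve as a counterexample; the non-zonoid character of $B$ must be of finite size and the gap $Vol(A)-Vol(B)>0$ only opens at the nonlinear level. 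Controlling that gap, together with convexity and the admissibility of $h_{\Pi A}$, is the crux of the argument.
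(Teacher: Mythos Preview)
The paper does not prove this theorem at all: it is quoted as a known result of Petty and Schneider (with references), and is used only as input to the subsequent Chakerian--Lutwak corollary. So there is no ``paper's own proof'' to compare against.

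As for your proposal itself: the strategy you describe --- recasting the hypothesis as $\Pi A\subseteq\Pi B$, invoking the spectral decomposition of the cosine transform, proving the affirmative half when $B$ is a zonoid, and then forcing the counterexample to live among non-zonoids --- is exactly the classical route of Schneider and Petty. Your write-up of the affirmative direction is clean and correct. What is missing is the actual construction: you identify that one must build a non-zonoid $B$ (say a body of revolution) and a competitor $A$, but you stop at ``verify directly'' and ``controlling that gap \ldots\ is the crux,'' without carrying it out. That is the entire content of the theorem; the preliminaries you give are standard. A complete argument would, for instance, take $B$ with surface area density $1+\varepsilon Y$ for a suitable even spherical harmonic $Y$ of degree $\ge 4$, exhibit $A$ explicitly (Schneider's original paper does this), and check the two inequalities.

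One remark on your heuristic that ``no small perturbation of a ball can serve as a counterexample'': this is correct only in the restricted sense that a naive first-order expansion with $A$ and $B$ both $O(\varepsilon)$-close to the \emph{same} ball fails. Schneider's actual counterexamples \emph{are} perturbations of a ball; the volume gap simply appears at second order (or via an asymmetric choice of base points). So the obstruction is to a particular linearized ansatz, not to perturbative constructions as such --- be careful not to overstate it.
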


We say that $K\in\K^n$ is centrally symmetric (or symmetric), if $K=-K$
(i.e. its center is $0$). Chakerian and Lutwak \cite{Chak-Lut} showed that
the bodies from the previous theorem satisfy a surface area inequality in
every position. For the sake of completeness, we append the proof.
\begin{thm*}[Chakerian, Lutwak]
Let $n\ge3$. Then there exist centrally symmetric bodies $A,B\in\K^n$,
such that $A\not\subseteq B$, and yet for every $L\in AF_n$ we have
$$Vol_{n-1}(\partial LA)\le Vol_{n-1}(\partial LB).$$
\end{thm*}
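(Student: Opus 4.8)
The plan is to reuse the very pair of bodies produced by the Petty--Schneider theorem and to show that the projection hypothesis they satisfy is stable under the action of the affine group, once it is read as an inclusion of \emph{projection bodies}. Let $A,B\in\K^n$ be centrally symmetric with $\Vol(A)>\Vol(B)$ and $\Vol_{n-1}(P_E A)\le \Vol_{n-1}(P_E B)$ for every hyperplane $E$; the latter already forces $A\not\subseteq B$ (were $A$ contained in a translate of $B$, central symmetry would give $A\subseteq B$ and hence $\Vol(A)\le\Vol(B)$, a contradiction). Encode the projection data by the body $\Pi K$ defined through $h_{\Pi K}(u)=\Vol_{n-1}(P_{u^\perp}K)$; taking $E=u^\perp$, the Petty--Schneider hypothesis reads exactly $h_{\Pi A}\le h_{\Pi B}$ on $S^{n-1}$, i.e. $\Pi A\subseteq\Pi B$. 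Since the boundary measure is translation invariant, it suffices to prove $S(LA)\le S(LB)$ for every $L\in GL_n$, where $S(K):=\Vol_{n-1}(\partial K)$.

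The heart of the matter is the behaviour of $\Pi K$ under a linear map. First I would establish the transformation law
\[
h_{\Pi(LK)}(u)=|\det L|\,h_{\Pi K}(L^{-1}u),\qquad\text{equivalently}\qquad \Pi(LK)=|\det L|\,L^{-T}\Pi K .
\]
The cleanest route is through the integral representation $h_{\Pi K}(u)=\tfrac12\int_{S^{n-1}}|\iprod{u}{v}|\,dS_{n-1}(K,v)$, where $S_{n-1}(K,\cdot)$ is the surface area measure (see \cite{Schneider_Book}), together with the way this measure transforms under $L$: a boundary element with unit normal $v$ is sent to one with unit normal $L^{-T}v/|L^{-T}v|$ whose $(n-1)$-area is multiplied by $|\det L|\,|L^{-T}v|$. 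Substituting and using $\iprod{u}{L^{-T}v}=\iprod{L^{-1}u}{v}$ gives the identity (one checks it on polytopes and passes to the limit). The decisive consequence is that the common factor $|\det L|\,L^{-T}$ preserves inclusion, so
\[
\Pi A\subseteq\Pi B\ \Longrightarrow\ \Pi(LA)=|\det L|L^{-T}\Pi A\subseteq|\det L|L^{-T}\Pi B=\Pi(LB).
\]

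Finally I would invoke Cauchy's projection formula, which states that, up to a dimensional constant $c_n$, the surface area is the spherical average of the $(n-1)$-volumes of the hyperplane projections:
\[
S(K)=c_n\int_{S^{n-1}}\Vol_{n-1}(P_{u^\perp}K)\,d\sigma(u)=c_n\int_{S^{n-1}}h_{\Pi K}(u)\,d\sigma(u).
\]
Hence $S(\cdot)$ is a monotone functional of the projection body (proportional to the mean width of $\Pi K$): if $\Pi M\subseteq\Pi M'$ then $h_{\Pi M}\le h_{\Pi M'}$ pointwise and the integrals compare. Applying this with $M=LA$, $M'=LB$, together with $\Pi(LA)\subseteq\Pi(LB)$, yields $S(LA)\le S(LB)$, that is $\Vol_{n-1}(\partial LA)\le\Vol_{n-1}(\partial LB)$ for every $L\in GL_n$, and therefore for every $L\in AF_n$. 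I expect the only genuine obstacle to be the transformation law for $\Pi K$: verifying the normal/area distortion factors $L^{-T}v/|L^{-T}v|$ and $|\det L|\,|L^{-T}v|$, and confirming that the scalar prefactor $|\det L|$ is the \emph{same} for $A$ and $B$, so that inclusion of projection bodies is truly preserved. Everything downstream is just monotonicity under inclusion combined with Cauchy's formula.
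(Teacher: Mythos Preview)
Your argument is correct and follows essentially the same route as the paper: take the Petty--Schneider pair, rewrite the projection hypothesis as $\Pi A\subseteq\Pi B$, use the linear covariance $\Pi(LK)=\tilde L\,\Pi K$ (you make $\tilde L=|\det L|\,L^{-T}$ explicit, the paper just asserts its existence) to get $\Pi(LA)\subseteq\Pi(LB)$, and then read off the surface area inequality via Cauchy/Kubota. The only difference is cosmetic: the paper phrases the last step as $W_1(\Pi K)=c_n|\partial K|$, you phrase it as the monotonicity of the mean-width functional $K\mapsto\int h_{\Pi K}\,d\sigma$.
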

\begin{proof} Let $A, B$ be the sets whose existence is assured by the
previous theorem. Recall the definition of the projection body $\Pi K$
of a convex body $K$: 
$$ h_{\Pi K}(u) = Vol_{n-1}(P_{u^\perp}(K)).$$
Using this notion, the assumption on $A$ and $B$ can be reformulated as
$\Pi A \subseteq \Pi B$. By Kubota's formula $W_1(\Pi K)=c_n|\partial K|$,
which implies that $|\partial A| \le |\partial B|$. Let $L \in AF_n$.
There exists $\tilde{L} \in GL_n$ such that for all $K\in\K^n$, $\Pi LK =
\tilde{L}\Pi K$. Thus, the inclusion $\Pi LA \subseteq \Pi LB$ holds as
before, and we get for all $L \in AF_n$: $$|\partial LA| \le |\partial LB|.$$
\end{proof}

In Section \ref{Sec_FL+Quermass} we recall some of the properties of
fractional linear maps and gather some technical lemmas, to deduce
Theorem \ref{Thm_iVol-FL}.

In Section \ref{Sec_Sums} we consider a different type of an indentifying
family of functionals. The volume of projective (or linear) transformations
of a body $A$ is replaced by the volume of Minkowski sums of $A$ with
arbitrary bodies. First let us mention the following curious yet easy
facts, which, in the same time, demonstrate well our intention.
\begin{thm}\label{Thm_Sum-Sym}
Let $n\ge1$, and let $A,B\in\K^n$ be centrally symmetric bodies. If for
all $K\in\K^n$, one has $|A+K|\le |B+K|$, then $A\subseteq B$.

Moreover: \\
\noindent Let $n\geq 1$, $K_0\in\K^n$, and let $A, B \in \K^n$ be
centrally symmetric bodies. If for every linear image $K=LK_0$, one has $|A+K|\le
|B+K|$, then $A \subseteq B$.
\end{thm}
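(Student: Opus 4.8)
The plan is to first convert the volume hypothesis into a mixed volume inequality, and then to feed that inequality very carefully chosen test bodies. For a fixed $K$, applying the hypothesis to $tK$ (and, for the second assertion, to $tLK_0=(tL)K_0$, which is again a linear image of $K_0$) gives $|A+tK|\le|B+tK|$ for all $t\ge0$. By the Steiner--Minkowski expansion both sides are degree-$n$ polynomials in $t$ sharing the same top coefficient $|K|=V(K[n])$, so the difference $|B+tK|-|A+tK|$ is a polynomial of degree $\le n-1$ that is nonnegative on $[0,\infty)$; dividing by $t^{n-1}$ and letting $t\to\infty$ forces its leading coefficient to be nonnegative, that is,
$$V(A,K[n-1])\le V(B,K[n-1]).$$
It then remains to deduce $A\subseteq B$ from this one-body mixed volume inequality, valid for all $K\in\K^n$ in the first assertion and for all $K=LK_0$ in the second.

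Next I would use the representation $V(A,K[n-1])=\frac1n\int_{S^{n-1}}h_A\,dS_K$, where $S_K$ is the surface area measure of $K$, and choose $K$ so that $S_K$ concentrates on an antipodal pair $\{u,-u\}$. For the first assertion take the flat cylinder $C_R=R\,P_{u^\perp}D_n+[-u,u]$: its area measure is exactly $R^{n-1}|P_{u^\perp}D_n|(\delta_u+\delta_{-u})$ plus a lateral part of total mass $O(R^{n-2})$ supported on the equator $u^\perp\cap S^{n-1}$. Hence, dividing the mixed volume inequality by $R^{n-1}$ and letting $R\to\infty$, the lateral part drops out and one obtains
$$h_A(u)+h_A(-u)\le h_B(u)+h_B(-u)\qquad\text{for every }u,$$
which is exactly the inclusion of difference bodies $A+(-A)\subseteq B+(-B)$.

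Now central symmetry enters decisively: since $A=-A$ and $B=-B$ one has $h_A(-u)=h_A(u)$ and $h_B(-u)=h_B(u)$, so the last inequality collapses to $h_A(u)\le h_B(u)$ for all $u$, i.e. $A\subseteq B$. (For $n=1$ the claim is immediate from $|A+K|=|A|+|K|$.) For the second, \emph{moreover}, assertion the only change is how the localizing bodies are built: in place of cylinders I would flatten the fixed $K_0$ by the linear maps $L_R$ acting as $R\cdot\mathrm{Id}$ on $u^\perp$ and as the identity on $\mathrm{span}(u)$. As $R\to\infty$ the body $L_RK_0$ flattens into a pancake perpendicular to $u$, its surface area measure concentrating at $\pm u$ with each pole carrying mass $\sim R^{n-1}|P_{u^\perp}K_0|$, while its rim contributes only $O(R^{n-2})$; since $K_0$ is nondegenerate we have $|P_{u^\perp}K_0|>0$, so the same limiting computation again yields $h_A(u)+h_A(-u)\le h_B(u)+h_B(-u)$, and symmetry finishes as before.

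The step I expect to be the crux is precisely this passage from the mixed volume inequality to inclusion. One is tempted to polarize $V(A,K[n-1])\le V(B,K[n-1])$ into the full family appearing in Weil's theorem, but inequalities do not polarize, so Weil's criterion is not directly available. The substitute is the deliberate degeneration of the test bodies, which, however, recovers only the even part $h_A(u)+h_A(-u)$ of the support function, i.e. only the inclusion of difference bodies. This is exactly the gap responsible for the negative answer to Shephard's problem, and it is central symmetry --- identifying $A+(-A)$ with $2A$ --- that closes it.
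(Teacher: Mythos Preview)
Your proof is correct and rests on the same idea as the paper's --- degenerate the test body in the direction $u^\perp$ so that only the width $w_A(u)=h_A(u)+h_A(-u)$ survives --- but you take a longer path. The paper skips the intermediate mixed-volume step entirely: it observes that the hypothesis $|A+K|\le|B+K|$ extends by continuity to degenerate $K$, takes $K$ to be a flat unit $(n-1)$-ball in $u^\perp$, and reads off directly that the leading coefficient of $|A+rK|$ as $r\to\infty$ is $w_A(u)$. Your route via the Steiner expansion, the inequality $V(A,K[n-1])\le V(B,K[n-1])$, and the surface-area-measure representation recovers the same width inequality with more machinery; what it buys you is that the localization step is phrased for genuinely $n$-dimensional test bodies, so no separate appeal to a continuity extension is needed.

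One small caveat on the \emph{moreover} part: for a general $K_0$ the claim that the ``rim'' of $L_RK_0$ has total surface measure $O(R^{n-2})$ is not obvious (and may fail for, say, an ellipsoid). All you actually need is $o(R^{n-1})$, and that follows cleanly from the Hausdorff convergence $\tfrac{1}{R}L_RK_0\to P_{u^\perp}K_0$ together with the continuity of mixed volumes (equivalently, the weak convergence $S_{\frac{1}{R}L_RK_0}\to |P_{u^\perp}K_0|_{n-1}(\delta_u+\delta_{-u})$). With that adjustment the argument goes through.
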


\begin{thm}\label{Thm_Sum-Gen}
Let $n\ge1$, and let $A,B\in\K^n$. If for every $K\in\K^n$, one has
$|A+K|\le |B+K|$, then there exists $x_0\in \R^n$ such that $A\subseteq B+x_0$.

Moreover:\\
Let $n \geq 1$, and let $A, B \in \K^n$. If for every
simplex $\Delta\in\K^n$, one has $|A + \Delta| \leq |B+ \Delta|$, then there
exists $x_0\in \R^n$ such that $A \subseteq B + x_0$.
\end{thm}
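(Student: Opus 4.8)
The plan is to reduce the hypothesis, which involves the $n$-parameter family of Minkowski sums, to a statement about a single mixed volume, and then to dualize that statement into an inequality of support functions.

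First I would pass from Minkowski sums to mixed volumes by a scaling argument. For a fixed $K$ and $t>0$, the body $tK$ is again a convex body (and $t\Delta$ is again a simplex), so the hypothesis yields $|A+tK|\le|B+tK|$ for every $t>0$. By the Minkowski expansion $|A+tK|=|K|\,t^{n}+n\,V(A,K[n-1])\,t^{n-1}+O(t^{n-2})$, and similarly for $B$. The leading terms $|K|\,t^{n}$ cancel in the difference, so dividing $|B+tK|-|A+tK|\ge 0$ by $t^{n-1}$ and letting $t\to\infty$ gives $V(A,K[n-1])\le V(B,K[n-1])$. This argument is valid verbatim for every $K\in\K^n$ (the first assertion) and, since dilates of simplices are simplices, for every simplex $\Delta$ (the moreover assertion). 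Note that one cannot simply invoke Weil's theorem here, since only the single--body mixed volumes $V(A,K[n-1])$ are accessible, not the general ones.

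Next I would translate this into support functions. Writing $f=h_B-h_A$ and using $n\,V(A,K[n-1])=\int_{S^{n-1}}h_A\,dS_{n-1}(K,\cdot)$, where $S_{n-1}(K,\cdot)$ is the surface area measure of $K$, the previous step reads $\int_{S^{n-1}} f\,dS_{n-1}(K,\cdot)\ge 0$. The goal $A\subseteq B+x_0$ is equivalent, via $h_{B+x_0}=h_B+\iprod{x_0}{\cdot}$, to the existence of $c\in\R^n$ (namely $c=-x_0$) with $f(u)\ge\iprod{c}{u}$ for all $u\in S^{n-1}$: a linear minorant for $f$. The heart of the matter is then the duality step: if $\int_{S^{n-1}} f\,d\mu\ge 0$ for every positive measure $\mu$ on $S^{n-1}$ with barycenter $\int u\,d\mu=0$, then $f$ admits a linear minorant. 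This is a theorem of the alternative: the semi--infinite system $\iprod{c}{u}\le f(u)$ $(u\in S^{n-1})$ is solvable in $c$ unless some centroid--zero $\mu\ge 0$ satisfies $\int f\,d\mu<0$ (integrate the constraints against $\mu$; the $c$--term vanishes because the barycenter is $0$), and the solvable case follows from a Hahn--Banach separation of $(0,-1)$ from the closed convex cone generated by $\{(u,f(u)):u\in S^{n-1}\}\subset\R^{n+1}$. To feed the hypothesis into this I use Minkowski's existence theorem: every centroid--zero measure of full--dimensional support equals $S_{n-1}(K,\cdot)$ for some $K\in\K^n$, and a general centroid--zero $\mu$ is recovered by adding $\eps$ times the uniform measure and letting $\eps\to 0$. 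This completes the first assertion.

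For the moreover statement only the measures $S_{n-1}(\Delta,\cdot)$ are available, which are precisely the centroid--zero measures supported on $n+1$ directions in general position. The main obstacle is to show that positivity of $\int f\,d\mu$ against these alone still forces positivity against \emph{all} centroid--zero $\mu$, so that the duality step applies unchanged. I would argue that the closed convex cone generated by simplex measures is the full cone $\mathcal C_0$ of centroid--zero positive measures: reduce to finitely supported $\mu$, decompose it into extreme rays of $\mathcal C_0$, namely circuits (minimal positively dependent families of directions, of support $\le n+1$), and approximate each circuit by simplex measures. The delicate case is a degenerate circuit of support $<n+1$, such as an antipodal pair $\delta_u+\delta_{-u}$, which I would realize as a weak--$*$ limit of surface measures of thin simplices whose two large facets have normals tending to $u$ and $-u$ while the remaining facet areas vanish. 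Since $\mu\mapsto\int f\,d\mu$ is weak--$*$ continuous ($f$ being continuous), the inequality $\int f\,d\mu\ge 0$ persists in the limit, and the duality step finishes the proof; controlling these degenerate circuits uniformly is the only genuinely technical point.
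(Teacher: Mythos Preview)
Your reduction to the mixed--volume inequality $V(A,K[n-1])\le V(B,K[n-1])$ is exactly what the paper does (they scale $A,B$ by $\varepsilon\to 0$ rather than $K$ by $t\to\infty$, but this is the same computation read from the other end of the Steiner polynomial). From that point the paper simply invokes Lutwak's theorem---$V(A,\Delta[n-1])\le V(B,\Delta[n-1])$ for all simplices $\Delta$ implies $A+x_0\subseteq B$---as a black box, which immediately gives both assertions at once.

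What you do instead is \emph{reprove} Lutwak's theorem from scratch: rewrite the mixed--volume inequality as $\int f\,dS_{n-1}(K,\cdot)\ge 0$ for $f=h_B-h_A$, use LP/Hahn--Banach duality to convert ``$f$ integrates nonnegatively against every centroid--zero measure'' into ``$f$ has a linear minorant'', and feed the hypothesis in via Minkowski's existence theorem (plus, for the simplex version, the fact that surface area measures of simplices generate the full cone $\mathcal C_0$). This is correct and is essentially how Lutwak's result is established; the paper's route is shorter only because it outsources this work to the citation. Your approach buys self--containment and makes transparent exactly which structural fact about simplices is being used, at the cost of the circuit--decomposition and thin--simplex approximation that you flag as the one technical point; the paper's approach buys brevity. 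Either is fine.
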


We include a direct proof of the symmetric case, since in this case the
argument is far simpler. To prove the general case, we first obtain the
inequality:
$$\forall K\in\K^n:\qquad V(A,K[n-1])\le V(B,K[n-1]),$$ where $V$ is the
$n$ dimensional mixed volume, and $K[n-1]$ stands for $n-1$ copies of the
body $K$. Finally, the proof may be completed by applying a beautiful result
of Lutwak to the last inequality (see Section \ref{Sec_Sums}). In the rest
of Section \ref{Sec_Sums} we investigate the situation where we only assume
that the $n-1$ dimensional volume of every section of $A+K$ is smaller than
that of $B+K$ (for every $K$).

In the last two sections, we formulate conditions to achieve inclusions
between two n-tuples of convex bodies, $K_1,\dots,K_n,\,L_1,\dots,L_n$, in
terms of the mixed volume of their affine or projective positions.

\section{Fractional linear maps}\label{Sec_FL+Quermass}
Let us recall the definition and some properties of fractional linear maps.
A fractional linear map is a map $F:\R^n\setminus H\to \R^n$, of the form
$$F(x)=\frac{Ax+b}{\iprod{x}{c}+d},$$ where $A$ is a linear operator in
$\R^n$,  $b, c \in \R^n$ and $d \in \R$, such that
the matrix \[\hat{A} = \left(\begin{array}{cc}
       A & b \\
       c & d \\
      \end{array}\right)\] is invertible (in $\R^{n+1}$). The affine hyperplane
$H=\{x|\iprod{x}{c}+d=0\}$ is called the defining hyperplane of $F$. Since such
maps are traces of linear maps in $\R^{n+1}$, we sometimes call the image
$F(K)$ of a convex body $K$ under such a map a {\sl projective position of the
body} $K$. These are the only trasformations (on, say, open convex domains),
which map any interval to an interval. Note that affine maps are a subgroup of
fractional linear maps, and that any projective position of a closed ellipsoid
is again an ellipsoid. The same is true for a simplex. For more details,
including proofs of the following useful facts, see \cite{AFM}.

\begin{prop*}
Denote by $H^+$ the half space $\{x_1>1\}$ (where $x_1$ is the first
coordinate of $x$) and let the map $F_0:H^+\to H^+$ (called the {\em
canonical form} of a fractional linear map) be given by \[F_0(x) =
\frac{x}{x_1-1}.\] For any $x_0, y_0 \in \R^n$ and a non-affine
fractional linear map $F$ with $F(x_0) = y_0$, there exist $B, C\in
GL_n$ such that for every $x\in \R^n$, \[ B(F(Cx+x_0)-y_0)=F_0(x).\]
\end{prop*}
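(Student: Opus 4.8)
The plan is to work in the projective (that is, $(n+1)$-dimensional linear) picture recalled from \cite{AFM}: a fractional linear map $x\mapsto\frac{Ax+b}{\iprod{x}{c}+d}$ is represented by the matrix $\hat A=\left(\begin{smallmatrix}A & b\\ c^{T} & d\end{smallmatrix}\right)$ acting on $(x,1)^{T}$, composition of maps corresponds to multiplication of matrices, and two such maps coincide precisely when their representing matrices are proportional. In this language the canonical form $F_0$ is represented by $\left(\begin{smallmatrix}I & 0\\ e_1^{T} & -1\end{smallmatrix}\right)$, and the whole statement reduces to producing an explicit factorization, which one can carry out by elementary linear algebra.

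First I would absorb the points $x_0,y_0$ by translations. Put $G(x)=F(x+x_0)-y_0$; this is again a non-affine fractional linear map, and $G(0)=F(x_0)-y_0=0$, with $0\in\dom G$ since $x_0\in\dom F$. The asserted identity $B(F(Cx+x_0)-y_0)=F_0(x)$ then reads $B\,G(Cx)=F_0(x)$, so it suffices to bring a non-affine fractional linear map fixing the origin to the form $F_0$ by a linear change of variables on each side.

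Next I would read off the coefficients of $G(x)=\frac{Ax+b}{\iprod{x}{c}+d}$. Since $0\in\dom G$ we have $d\neq0$, and $G(0)=0$ forces $b=0$; non-affineness gives $c\neq0$; and as the representing matrix $\left(\begin{smallmatrix}A & 0\\ c^{T} & d\end{smallmatrix}\right)$ is invertible with determinant $d\det A$, we conclude $A\in GL_n$. Rescaling $\hat A$ (which leaves $G$ unchanged) we may take $d=-1$, so that $G(x)=\frac{Ax}{\iprod{x}{c}-1}$. Now choose $C\in GL_n$ with $C^{T}c=e_1$, which is possible because $c\neq0$, and set $B=(AC)^{-1}\in GL_n$. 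Since $\iprod{Cx}{c}=\iprod{x}{C^{T}c}=\iprod{x}{e_1}=x_1$, we obtain $B\,G(Cx)=\frac{BACx}{x_1-1}=\frac{x}{x_1-1}=F_0(x)$, as required.

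The computation is routine, and the only points that genuinely need attention are the invertibility bookkeeping — that $A\in GL_n$ and that an invertible $C$ with $C^{T}c=e_1$ exists, so that $B=(AC)^{-1}$ truly lies in $GL_n$ — and the justification that equality of the two rational expressions follows from matching numerator and denominator, which is exactly the statement that fractional linear maps with proportional representing matrices coincide. With the projective correspondence in hand, nothing here is a serious obstacle; the substance of the proposition is simply recognizing the right normalization (translating so that $G(0)=0$) and the target coefficients $(A,c,d)=(I,e_1,-1)$ of $F_0$.
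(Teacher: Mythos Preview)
Your argument is correct: translating to $G(0)=0$, normalizing to $d=-1$, and then choosing $C\in GL_n$ with $C^{T}c=e_1$ and $B=(AC)^{-1}$ gives exactly $B\,G(Cx)=F_0(x)$ on $\{x_1\neq1\}$. The invertibility checks (that $A\in GL_n$ from $\det\hat A=d\det A\neq0$, and that $C$ exists since $c\neq0$) are handled cleanly.

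As for comparison: the paper does not actually prove this proposition; it merely records it and refers to \cite{AFM} for the proof. Your write-up is precisely the intended elementary computation in the projective $(n+1)$-dimensional model, so there is nothing to contrast --- you have supplied what the paper outsources.
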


Fractional linear maps turn up naturally in convexity. For example,
they are strongly connected to the polarity map, as can be
seen in the following, easily verified proposition.
\begin{prop*}
Let $K\subseteq\{x_1<1\}\subset\R^n$ be a closed convex set
containing $0$. Then for the canonical form $F_0(x)=\frac{x}{x_1-1}$
the following holds: \[F_0(K) = (e_1-K^\circ)^\circ, \] where $e_1=
(1, 0,\ldots, 0)\in \R^n$
\end{prop*}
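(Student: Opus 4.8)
The plan is to verify the set identity directly from the definition of the polar, using the bipolar theorem together with the fact that $F_0$ acts as an involution on the relevant half-space. First I would record that on the half-space $\{x_1<1\}$ the map $F_0(x)=\frac{x}{x_1-1}$ is well defined (the denominator is negative there) and involutive: solving $z=\frac{x}{x_1-1}$ for $x$ gives $x=\frac{z}{z_1-1}$, and a short check shows that $F_0$ carries $\{x_1<1\}$ bijectively onto itself. Since $K\subseteq\{x_1<1\}$ and $0\in K$, this means that $z\in F_0(K)$ is equivalent to $z_1<1$ together with $F_0(z)=\frac{z}{z_1-1}\in K$.

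Next, because $K$ is closed, convex and contains $0$, the bipolar theorem gives $K=K^{\circ\circ}$, so $w\in K$ iff $\iprod{w}{p}\le 1$ for every $p\in K^\circ$. Applying this to $w=\frac{z}{z_1-1}$ and clearing the (negative) denominator $z_1-1$, which reverses the inequality, the condition $\frac{1}{z_1-1}\iprod{z}{p}\le 1$ becomes $\iprod{z}{p}\ge z_1-1$, i.e. $\iprod{z}{e_1-p}\le 1$. Hence $z\in F_0(K)$ iff $\iprod{z}{e_1-p}\le 1$ for all $p\in K^\circ$. As $p$ ranges over $K^\circ$ the vector $e_1-p$ ranges over $e_1-K^\circ$, so this is precisely the statement $z\in(e_1-K^\circ)^\circ$.

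It then remains to reconcile the auxiliary constraint $z_1<1$ coming from the domain. Since $0\in K^\circ$, the point $e_1$ lies in $e_1-K^\circ$, so every $z\in(e_1-K^\circ)^\circ$ already satisfies $\iprod{z}{e_1}=z_1\le 1$; moreover the extreme case $z_1=1$ forces $\iprod{z}{p}\ge 0$ for all $p\in K^\circ$, which (when $0$ is interior to $K^\circ$) is impossible for $z\neq 0$. Thus the constraint $z_1<1$ is automatically met on $(e_1-K^\circ)^\circ$, and the two sets coincide.

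Each step is elementary, and the one place demanding real care, the step I would watch most closely, is the sign bookkeeping: because we apply $F_0$ on $\{x_1<1\}$ rather than on its stated domain $\{x_1>1\}$, the denominator $z_1-1$ is negative, so clearing it flips the inequality, and one must confirm that the induced boundary condition $z_1<1$ is genuinely redundant rather than cutting off part of the polar. An alternative, coordinate-free route is to lift everything to $\R^{n+1}$, where $F_0$ becomes a linear involution and polarity becomes the standard duality of the ambient space; the translate $e_1-K^\circ$ then arises as the trace of this linear map. For this easily verified statement, however, the direct computation above is the most transparent.
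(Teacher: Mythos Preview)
The paper does not actually supply a proof of this proposition; it is stated as an ``easily verified'' fact and left to the reader, so there is no paper argument to compare against. Your direct verification via the bipolar theorem is exactly the computation the authors have in mind, and the algebra --- involutivity of $F_0$ on $\{x_1<1\}$, the sign flip when clearing the negative denominator $z_1-1$, and the identification of the condition $\langle z,\,e_1-p\rangle\le1$ for all $p\in K^\circ$ with membership in $(e_1-K^\circ)^\circ$ --- is all correct.

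The one genuine soft spot is the step you yourself flag: disposing of the boundary case $z_1=1$. Your argument there invokes ``$0$ interior to $K^\circ$'', which is equivalent to $K$ being bounded, and that hypothesis is not part of the statement as written. In fact for unbounded $K$ the identity can fail on the hyperplane $\{z_1=1\}$: take $K=\{x:x_1\le0\}$, for which $K^\circ=\{t e_1:t\ge0\}$, so $(e_1-K^\circ)^\circ=\{z:0\le z_1\le1\}$, while $F_0(K)=\{z:0\le z_1<1\}$. This is a defect of the proposition's hypotheses rather than of your proof; for the convex bodies $K\in\K^n$ the paper actually works with, $K$ is compact, $0$ lies in the interior of $K^\circ$, and your argument goes through cleanly. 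If you want a version that matches the stated generality, replace the conclusion by $\overline{F_0(K)}=(e_1-K^\circ)^\circ$, or add the hypothesis that $K$ is bounded.
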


\begin{rem}\label{Rem_Replace-By-Balls}
Let $K,T\in\K^n$. It is useful to note that when $K\not\subseteq T$, there exist disjoint
closed balls $D_K, D_T$ of positive radius, and a hyperplane $H$ (which
divides $\R^n$ to two open half spaces $H^+$, $H^-$), such that:
$$ D_K\subset K\cap H^-,\qquad T\subset D_T\subset H^+.$$
\end{rem}
This trivial observation means it is sufficient in many cases to
consider the action of fractional linear maps on dilations of the ball.
To this end we write the following easily verified fact, resulting
from a direct computation:
\begin{fact}\label{Fact_FL-of-Balls}
Let $\E_{R,r,\delta}$ stand for the image of the Euclidean ball $D_n$
under the diagonal linear map $A_{R,r}=\text{diag}\{R,r,\dots,r\}$,
shifted by $(1-\delta-R)e_1$, so that the distance of $\E_{R,r,\delta}$
from the hyperplane $H_0=\{x_1=1\}$ is $\delta$. That is: $$\E_{R,r,\delta}=
A_{R,r}D_n+(1-\delta-R)e_1.$$ Then for the canonical form $F_0$ one has:
$$F_0(\E_{R,R,\delta})=\E_{\frac{R}{\delta (\delta+2R)},
\frac{R}{\delta \sqrt{\delta+2R}},\frac{1}{\delta+2R}}.$$
In particular, $F_0(\E_{R,R,\delta})$ contains a translate
of $mD_n$ and is contained in a translate of $MD_n$, where
$m=\frac{R}{\delta}
\min\{\frac{1}{\delta+2R},\frac{1}{\sqrt{\delta+2R}}\}$, and
$M=\frac{R}{\delta}
\max\{\frac{1}{\delta+2R},\frac{1}{\sqrt{\delta+2R}}\}$.
\end{fact}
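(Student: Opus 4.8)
The plan is to combine two facts recalled above: that $F_0$ carries ellipsoids to ellipsoids, and that the whole configuration is rotationally symmetric about the $x_1$-axis. Since both $\E_{R,R,\delta}$ and $F_0(x)=x/(x_1-1)$ are invariant under every rotation fixing $e_1$, the image $F_0(\E_{R,R,\delta})$ is a spheroid of revolution about the $x_1$-axis, hence is determined by just three numbers: the $x_1$-coordinate of its center, its axial semi-axis, and its transverse semi-axis. So it suffices to work in the two-dimensional meridional section, i.e.\ to see where $F_0$ sends the disc $(x_1-c_1)^2+x_2^2\le R^2$ with $c_1=1-\delta-R$, and then to revolve the resulting ellipse about $e_1$.

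The axial data is the easy part. The two endpoints of the ball on the axis are $P_-=(1-\delta-2R)e_1$ and $P_+=(1-\delta)e_1$, and applying $F_0$ directly gives $F_0(P_-)=\bigl(1-\tfrac{1}{\delta+2R}\bigr)e_1$ and $F_0(P_+)=\bigl(1-\tfrac1\delta\bigr)e_1$. These are the two axial vertices of the image spheroid, so its center is their midpoint and its axial semi-axis is half their distance. A short computation yields the axial semi-axis $\tfrac{R}{\delta(\delta+2R)}$, while the rightmost vertex $1-\tfrac{1}{\delta+2R}$ sits at distance $\tfrac{1}{\delta+2R}$ from $H_0=\{x_1=1\}$, giving $\delta'=\tfrac{1}{\delta+2R}$; one also checks the center lands exactly at $1-\delta'-R'$, so the image is indeed of the form $\E_{R',r',\delta'}$.

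The main obstacle is the transverse semi-axis, and the subtlety to watch is that the widest circle of the original ball (its equator $x_1=c_1$) does \emph{not} map to the widest circle of the image: $F_0$ distorts the meridian, so the equator of the image is attained at a different latitude. I see two clean routes. Since $F_0$ is an involution ($F_0^{-1}=F_0$), one may substitute $x=F_0(y)$ into $(x_1-c_1)^2+x_2^2=R^2$, clear the denominator $(y_1-1)^2$, and complete the square in $y_1$; the coefficient of $y_1^2$ is $(\delta+R)^2-R^2=\delta(\delta+2R)$, and after simplification the equation collapses to $\delta(\delta+2R)(y_1-y_1^\ast)^2+y_2^2=\tfrac{R^2}{\delta(\delta+2R)}$, from which both semi-axes are read off at once. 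Alternatively one parametrizes the boundary circle as $(c_1+R\cos\theta,\,R\sin\theta)$ and maximizes the single-variable function $\tfrac{R\sin\theta}{c_1+R\cos\theta-1}$, whose critical point is $\cos\theta=\tfrac{R}{\delta+R}$. Either route gives the transverse semi-axis $\tfrac{R}{\sqrt{\delta(\delta+2R)}}$. (I note in passing that this forces the displayed transverse parameter to read $\tfrac{R}{\sqrt{\delta}\sqrt{\delta+2R}}$ rather than $\tfrac{R}{\delta\sqrt{\delta+2R}}$.)

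Finally the ``In particular'' clause is immediate: a spheroid with semi-axes $R'=\tfrac{R}{\delta(\delta+2R)}$ and $r'=\tfrac{R}{\sqrt{\delta(\delta+2R)}}$ contains a translate of the ball of radius $m=\min\{R',r'\}$ and is contained in a translate of the ball of radius $M=\max\{R',r'\}$, which is exactly the asserted sandwiching (with $r'$ as corrected above). The only genuine computation in the whole argument is the completion of the square in the previous step; everything else is bookkeeping with the explicit formula for $F_0$.
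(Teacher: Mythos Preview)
The paper gives no proof of this Fact beyond the sentence ``easily verified fact, resulting from a direct computation''; your proposal \emph{is} precisely such a direct computation, and it is correct. Exploiting the rotational symmetry about $e_1$ to reduce to a $2$-dimensional meridian, reading off the axial data from the two endpoints $P_\pm$, and then obtaining the transverse semi-axis either by substituting $x=F_0(y)$ and completing the square or by optimizing the parametrization, is exactly the intended route.

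Your side remark is also correct: the transverse semi-axis comes out to $r'=R/\sqrt{\delta(\delta+2R)}=R/(\sqrt{\delta}\,\sqrt{\delta+2R})$, not $R/(\delta\sqrt{\delta+2R})$ as printed; the printed $m,M$ inherit the same slip. This does not affect any of the subsequent applications in the paper (Lemma~\ref{Lem_FL-Diverg} and Section~\ref{Sec_ntuples-FL}), since in every use one only needs that the relevant radius diverges as $\delta\to0$ while the others stay bounded, and $R/\sqrt{\delta(\delta+2R)}\to\infty$ just as well.
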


We shall now prove the main Lemma required for Theorem \ref{Thm_iVol-FL}.
\begin{lem}\label{Lem_FL-Diverg}
Let $K,T\in\K^n$ satisfy $K\not\subseteq T$. For every $\varepsilon>0$
there exist $x_0\in\R^n$ and a fractional linear map $F$ such that:
$$F(T)\subseteq \varepsilon D_n, \qquad D_n+x_0\subseteq F(K).$$
\end{lem}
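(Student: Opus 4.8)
The plan is to exploit the defining feature of a non-affine fractional linear map: it expands sets lying near its defining hyperplane and contracts sets lying far from it, and I would push both effects to the extreme. As motivation I would invoke Remark~\ref{Rem_Replace-By-Balls}, which reduces matters to a ball: it suffices to produce a single inscribed ball $D_K\subseteq K$ whose image contains a unit ball while the image of all of $T$ is swallowed by $\eps D_n$. The delicate point is that $F$ must be applied to all of $K$, so the whole body $K$ (not merely $D_K$) has to lie in the open half-space $\dom(F)$. For this reason I would take the defining hyperplane to be a \emph{supporting} hyperplane of $K$: choose a direction $u$ with $h_K(u)>h_T(u)$ (such $u$ exists, for otherwise $K\subseteq T$), and displace the supporting hyperplane of $K$ in direction $u$ outward by a small $\eta>0$ to get an affine hyperplane $H$ with all of $K$ strictly on one side. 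Both $K$ and $T$ then lie in $\dom(F)$.

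Next I would normalize: conjugating by affine maps to the canonical form $F_0(x)=x/(x_1-1)$, I may assume $H=\{x_1=1\}$ and apply Fact~\ref{Fact_FL-of-Balls}. The whole argument rests on an asymmetry between the two estimates. On the $K$ side, an inscribed ball $D_K\subseteq K$ can be placed arbitrarily close to $H$ near the contact point of the supporting hyperplane (for instance as a shrinking ball inside the cone spanned by a fixed inner ball of $K$ and that contact point); writing $\delta$ for its distance to $H$, Fact~\ref{Fact_FL-of-Balls} shows that as $\delta\to 0$ the image $F_0(D_K)$ is an ellipsoid whose \emph{smallest} semiaxis tends to $\infty$. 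On the $T$ side, $T$ lies at distance at least $g:=h_K(u)-h_T(u)>0$ from $H$ measured along $u$; since the denominator $\iprod{x}{c}+d$ of $F_0$ depends only on the coordinate normal to $H$, it is bounded below by a fixed positive constant on all of $T$, irrespective of how far $T$ spreads parallel to $H$. Hence $F_0(T)$ is a fixed bounded set, say of diameter $d_T$.

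Combining the two, $F_0(D_K)$ contains a Euclidean ball of radius $\rho$ with $\rho\to\infty$, while $\mathrm{diam}\,F_0(T)\le d_T$ stays fixed. I would then post-compose with an affine map $B$ — a uniform contraction by $\mu=1/\rho$ followed by a translation, which is again a fractional linear map — and set $F=B\circ F_0\circ C$ (with $C$ the initial affine normalization). The contraction turns the large ball inside $F_0(D_K)$ into a unit ball, so $D_n+x_0\subseteq F(D_K)\subseteq F(K)$, while $F(T)$ is contained in a ball of radius $\mu d_T=d_T/\rho$; choosing $D_K$ close enough to $H$ that $\rho\ge d_T/\eps$ yields $F(T)\subseteq\eps D_n$, the translation in $B$ being used to center $F(T)$ at the origin (its effect on the $K$ side is absorbed into $x_0$).

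The step I expect to be the main obstacle is exactly the tension on the $K$ side: the defining hyperplane must keep \emph{all} of $K$ inside the domain, yet an inscribed ball must approach it arbitrarily closely. Using the separating hyperplane between $D_K$ and $D_T$ directly fails here, since $K$ may cross it; taking a slightly displaced supporting hyperplane of $K$ and inscribing the ball at its contact point is what reconciles the two demands. A secondary technical check is that the single uniform contraction $B$, which must straighten out the highly eccentric ellipsoid $F_0(D_K)$, does not enlarge $F_0(T)$ — this is precisely why I insist that the smallest semiaxis of $F_0(D_K)$ (and not merely its volume) diverges, so that one scalar contraction handles both inclusions simultaneously.
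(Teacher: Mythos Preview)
Your argument is correct and follows the paper's strategy: pass to an inscribed ball, normalize affinely to the canonical form $F_0$, invoke Fact~\ref{Fact_FL-of-Balls} to see the image of that ball blow up while the image of $T$ stays bounded, and finish with a dilation and a shift. The one place you deviate is precisely the admissibility issue you flag. The paper fixes the ball $D_K$ from Remark~\ref{Rem_Replace-By-Balls} (radius normalized to $1$) and lets its distance $\delta$ to the defining hyperplane $\{x_1=1\}$ tend to $0$, without verifying that all of $A(K)$ stays in $\{x_1<1\}$ --- and in general it need not, since $K$ may extend past $D_K$ on the side opposite $D_T$. You instead pin the defining hyperplane just outside a supporting hyperplane of $K$ and let a \emph{shrinking} inscribed ball approach the contact point, so that $K\subset\dom(F)$ holds by construction; the blow-up in Fact~\ref{Fact_FL-of-Balls} then comes from $R$ and $\delta$ tending to $0$ together (with $R/\delta$ bounded below) rather than from $\delta\to 0$ with $R=1$ fixed. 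The underlying mechanism is identical, and your version supplies the domain bookkeeping that the paper leaves implicit.
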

\begin{proof}
Let $\varepsilon>0$, and let $D_K, D_T$ be balls satisfying the
inclusions in Remark \ref{Rem_Replace-By-Balls}. It suffices to find a
fractional linear map $F$ such that $F(D_K)$ contains a translate of
$D_n$, and $F(D_T)\subseteq \varepsilon D_n$. Without loss of generality
(by applying an affine map), we may assume that the centers of $D_K$
and $D_T$ both lie on the $x_1$ coordinate axis, and that: $$
D_K=\E_{1,1,\delta},\qquad
D_T=\E_{R,R,d},$$ for some $d>2$, and $R,\delta>0$.
From \ref{Fact_FL-of-Balls} it follows that $F_0(D_K)$
contains a translate of $\frac{1}{\delta(\delta+2)}D_n$, and
$F_0(D_T)$ is contained in a translate of $\frac{R}
{d\sqrt{d+2R}}D_n\subset RD_n$. Since $\delta$ is arbitrarily
small, the result follows.
\end{proof}
Theorem \ref{Thm_iVol-FL} now follows in an obvious way.

\section{Comparing convex bodies via Minkowski sums}\label{Sec_Sums}
We will begin with proving the symmetric case.\\
{\bf Proof of Theorem \ref{Thm_Sum-Sym}.} We assume that the symmetric
sets $A, B$ satisfy $|A + K|\le |B+K|$, for all $K\in\K^n$. The case
$n=1$ is trivial. Assume $n\ge 2$ and let $u\in S^{n-1}$. Note that the
inequality $|A + K| \leq |B+K|$ holds also for a compact convex set $K$
with empty interior, and let $K\subset u^\perp$ be an $n-1$ dimensional
Euclidean  ball of ($n-1$ dimensional) volume $1$. The leading coefficient
in $|A+rK|$ for $r\to\infty$ is the width $w_A(u)=2h_A(u)$, and since
$|A+rK|\le|B+rK|$ we have: $$
\forall u\in S^{n-1},\qquad h_A(u)\le h_B(u),$$ as required. \qed

In the case of general bodies, the previous inequality on the widths:
$$\forall u\in S^{n-1},\qquad w_A(u)\le w_B(u),$$ does not imply the
desired inclusion $A\subseteq B$. However, an inequality on the integrals
$\int h_Ad\sigma_K\le\int h_Bd\sigma_K$, against any surface area measure
$\sigma_K$ of a convex body $K$, may be obtained by an argument similar to
that from the proof of Theorem \ref{Thm_Sum-Sym}. It turns out to be
sufficient, by the following theorem from \cite{Lutwak}:
\begin{thm*}[Lutwak] Let $A,B\in\K^n$. Assume that for every simplex
$\Delta\in\K^n$ one has $$V(A,\Delta[n-1])\le V(B,\Delta[n-1]).$$ Then there
exists $x_0\in\R^n$ such that $A+x_0\subseteq B$.
\end{thm*}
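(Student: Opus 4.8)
The plan is to reduce Lutwak's statement to Weil's theorem (stated above): I will show that the hypothesis for simplices, recast in terms of surface area measures, propagates to every mixed volume $V(A,K_2,\dots,K_n)$. The starting point is the standard integral representation of mixed volumes (see \cite{Schneider_Book}),
\[ V(K_1,K_2,\dots,K_n)=\frac1n\int_{S^{n-1}}h_{K_1}(u)\,dS(K_2,\dots,K_n;u), \]
where $S(K_2,\dots,K_n;\cdot)$ is the mixed surface area measure. Taking $K_2=\dots=K_n=\Delta$, the measure $S_{n-1}(\Delta,\cdot)$ of a simplex is the discrete measure $\sum_{j=0}^{n}a_j\delta_{u_j}$ carried by the $n+1$ unit facet normals $u_0,\dots,u_n$ with weights $a_j$ equal to the facet areas; these satisfy the closure relation $\sum_j a_j u_j=0$, and any $n$ of the $u_j$ are linearly independent. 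Thus the hypothesis $V(A,\Delta[n-1])\le V(B,\Delta[n-1])$ reads $\int h_A\,d\mu\le\int h_B\,d\mu$ for every such ``simplex measure'' $\mu$. Conversely, by Minkowski's existence theorem every general-position $(n+1)$-tuple of unit vectors admitting a positive closure relation is the facet data of a simplex, so the inequality holds for all simplex measures.

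The heart of the argument is to identify the weak-$*$ closed convex cone $\mathcal{C}$ generated by the simplex measures. I claim $\mathcal{C}$ is exactly the set of nonnegative measures $\mu$ on $S^{n-1}$ with vanishing barycentric vector $\int u\,d\mu(u)=0$. The inclusion ``$\subseteq$'' is clear, since the closure relation survives positive combinations and weak-$*$ limits. For ``$\supseteq$'' I would proceed in two steps: (i) a Carathéodory-type decomposition, that any discrete measure $\sum_i a_i\delta_{v_i}$ with $a_i>0$ and $\sum_i a_i v_i=0$ is a finite nonnegative combination of simplex measures, proved by induction on the number of atoms, peeling off at each stage an $(n+1)$-atom subconfiguration carrying a positive closure relation (which exists by the conical Carathéodory theorem); and (ii) a density step, weakly approximating an arbitrary nonnegative zero-barycenter measure by discrete ones, correcting the barycenter by a small perturbation and nudging the atoms into general position so that each approximant decomposes as in (i).

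Since $h_A-h_B$ is continuous on $S^{n-1}$, the linear functional $\mu\mapsto\int(h_A-h_B)\,d\mu$ is weak-$*$ continuous, so the set $\{\mu:\int h_A\,d\mu\le\int h_B\,d\mu\}$ is a weak-$*$ closed convex cone; as it contains all simplex measures, it contains $\mathcal{C}$. Every mixed surface area measure $S(K_2,\dots,K_n;\cdot)$ is nonnegative and has vanishing barycentric vector (the generalized Minkowski relation, a consequence of the translation invariance of mixed volumes), hence lies in $\mathcal{C}$. By the integral representation this gives $V(A,K_2,\dots,K_n)\le V(B,K_2,\dots,K_n)$ for all $K_2,\dots,K_n\in\K^n$, and Weil's theorem then furnishes $x_0$ with $A+x_0\subseteq B$.

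The step I expect to be the main obstacle is the identification of $\mathcal{C}$, specifically part (ii): the simultaneous approximation that preserves nonnegativity, restores the zero-barycenter constraint, and places the atoms in general position. One must also check that weak-$*$ limits lose no mass (harmless since $S^{n-1}$ is compact) and that the peeling in (i) terminates with genuine, nondegenerate simplices; these bookkeeping points, rather than any deep inequality, are where the care is needed.
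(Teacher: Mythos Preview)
The paper does not prove this theorem; it is quoted from \cite{Lutwak} and used as a black box in the proof of Theorem~\ref{Thm_Sum-Gen}. There is therefore no proof in the paper to compare your attempt against.

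That said, your route is sound and is essentially the argument behind Lutwak's result. The reduction to Weil's theorem via the integral representation $V(A,K_2,\dots,K_n)=\frac1n\int h_A\,dS(K_2,\dots,K_n;\cdot)$ is the natural one, and the cone identification you isolate is correct: the weak-$*$ closed convex cone generated by the surface area measures of simplices is exactly the cone of nonnegative Borel measures on $S^{n-1}$ with vanishing first moment. For step~(i), your peeling argument works once you observe that from $\sum_i a_i v_i=0$ with $a_i>0$ one has $-v_1\in\mathrm{cone}(v_2,\dots,v_m)$, so conical Carath\'eodory yields a positive relation among at most $n+1$ of the $v_i$; under the general-position hypothesis it must involve exactly $n+1$ of them, producing a genuine simplex measure to subtract. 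For step~(ii), discretizing, adding a single small atom to restore the barycenter, and perturbing into general position are all routine and cost nothing in the weak-$*$ limit. The bookkeeping points you flag are real but minor; there is no missing idea.
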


\noindent{\bf Proof of Theorem \ref{Thm_Sum-Gen}.} It follows from the
assumption  that for every $K\in\K^n$ and every $\varepsilon>0$, we have
$|K+\varepsilon A|\le |K+\varepsilon B|$. Comparing derivatives at
$\varepsilon=0$ yields an inequality between mixed volumes: $$\forall K\in\K^n,\qquad
V(A,K[n-1])\le V(B,K[n-1]).$$ The conclusion follows from Lutwak's Theorem.\qed

Clearly, these families of functionals are by no means minimal. In the proof
of Theorem \ref{Thm_Sum-Gen} we have in fact used only the functionals
$\{A\mapsto|A+\Delta|\}$. In the symmetric case, we only used dilates of
a flat ball. This leads to the formally stronger formulations of Theorems
\ref{Thm_Sum-Sym}, \ref{Thm_Sum-Gen}.

Let us use these facts to add some information to the well known Busemann-Petty
problem. The Busemann-Petty problem is concerned with comparisons of volume of
central sections. That is, given two centrally symmetric sets $A,B\in\K^n$, satisfying
$|A \cap E| \leq |B \cap E|$ for every $n-1$ dimensional subspace $E$, does it
imply that $|A|\le |B|$? As shown in \cite{GKS, Koldobsky, Zhang}, the answer
is negative for all $n\ge 5$ and positive for $n \leq 4$.
However, if we combine intersections with Minkowski sums, we may use
Theorem \ref{Thm_Sum-Sym} to get:
\begin{cor} \label{Cor_SymmSections}
Let $n\ge2$, and let $A,B\in\K^n$ be centrally symmetric bodies. If for
all $K\in\K^n$ and $E\in G_{n,n-1}$ one has
$$|E\cap(A+K)| \le |E\cap(B+K)|,$$ then $A\subseteq B$. As in
Theorem \ref{Thm_Sum-Sym}, it suffices to fix a body $K_0$, and check
the condition 
only for linear images of $K_0$.
\end{cor}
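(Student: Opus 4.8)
The plan is to reduce Corollary~\ref{Cor_SymmSections} to Theorem~\ref{Thm_Sum-Sym}. The hypothesis gives us control over sectional volumes of $A+K$ and $B+K$, but Theorem~\ref{Thm_Sum-Sym} requires control over the full $n$-dimensional volumes $|A+K'|\le|B+K'|$ for all symmetric $K'$. So the central idea is to manufacture, from a sectional inequality, a genuine $n$-dimensional volume inequality, by absorbing the "missing" dimension into the summand $K$ itself.

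First I would fix an arbitrary symmetric body $K'\in\K^n$ and a direction $u\in S^{n-1}$, and consider adding to $K'$ a long segment (or thin cylinder) in the direction $u$. Concretely, given a hyperplane direction, I want to slice $A+K'$ by a family of parallel hyperplanes $E+tu$ orthogonal to $u$ and integrate. By Fubini, $|A+K'|=\int_{\R}\operatorname{Vol}_{n-1}\big((E+tu)\cap(A+K')\big)\,dt$, where $E=u^\perp$. The hypothesis as stated controls only sections through a fixed subspace $E\in G_{n,n-1}$ (through the origin), not all parallel translates, so the trick is to encode the translate parameter $t$ into the body $K$: replacing $K$ by $K+tu$ shifts the relevant section, and since the inequality $|E\cap(A+K)|\le|E\cap(A+K)|$ is assumed for every body $K$, it in particular holds for each $K+tu$. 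Thus for each $t$, $\operatorname{Vol}_{n-1}\big(E\cap(A+K+tu)\big)\le\operatorname{Vol}_{n-1}\big(E\cap(B+K+tu)\big)$, and $E\cap(A+K+tu)$ is (a translate within $E$ of) the section $(E-tu)\cap(A+K)$. Integrating over $t\in\R$ and applying Fubini on both sides then yields $|A+K|\le|B+K|$ for the original body $K$.

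Having obtained $|A+K|\le|B+K|$ for all symmetric $K$ (and, in the formally stronger version, for all linear images $K=LK_0$ of a fixed $K_0$, since the construction above preserves that restriction as long as one is careful about which bodies the sectional hypothesis is applied to), I would then invoke Theorem~\ref{Thm_Sum-Sym} directly to conclude $A\subseteq B$. The symmetry of $A$, $B$ (and of the auxiliary bodies) guarantees the conclusion is a genuine inclusion rather than an inclusion up to translation.

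The main obstacle I anticipate is the bookkeeping needed to make the Fubini step rigorous: one must verify that the hypothesis, stated for sections through a single subspace $E$, really does deliver the parallel-section inequality for every $t$ once we allow $K$ to range over all bodies (respectively, over all linear images of $K_0$). In particular, for the ``suffices to fix $K_0$'' addendum, I need to check that the bodies $K+tu$ appearing in the argument can themselves be realized as linear images of a single fixed body, or else reformulate the reduction so that the flat-ball argument of Theorem~\ref{Thm_Sum-Sym} applies verbatim. This is a measurability-and-parametrization issue rather than a deep one, and I expect it to be routine once the correct family of summands is identified.
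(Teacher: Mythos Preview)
Your Fubini reduction is correct for the main statement: replacing $K$ by $K-tu$ and integrating in $t$ recovers $|A+K|\le|B+K|$ for every $K\in\K^n$, after which Theorem~\ref{Thm_Sum-Sym} gives $A\subseteq B$. The paper offers no explicit argument beyond pointing to Theorem~\ref{Thm_Sum-Sym}, so this is a legitimate way to fill it in.

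The obstacle you flag for the ``moreover'' clause, however, is a real gap rather than bookkeeping. The bodies $K-tu$ you need are affine but not linear images of $K_0$: a degenerate linear image $LK_0$ always lies in a linear subspace, whereas for $t\neq0$ a flat body sitting in the affine hyperplane $E-tu$ does not; and a non-degenerate $LK_0$ is $n$-dimensional. So your integration scheme cannot be carried out under the restricted hypothesis, and no reparametrization repairs this. The route that handles both versions at once avoids Fubini and works hyperplane by hyperplane. The key identity is that whenever $K\subset E$ one has $E\cap(A+K)=(E\cap A)+K$ (if $x\in E$ and $x=a+k$ with $k\in E$, then $a=x-k\in E$). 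Fixing $E\in G_{n,n-1}$ and restricting the hypothesis to (limits of) $K$ contained in $E$ then yields $|(E\cap A)+K|\le|(E\cap B)+K|$ for all such $K$, which is exactly the hypothesis of Theorem~\ref{Thm_Sum-Sym} applied inside the $(n{-}1)$-dimensional space $E$ to the symmetric bodies $E\cap A$, $E\cap B$. Hence $E\cap A\subseteq E\cap B$ for every $E$, and so $A\subseteq B$. Since the flat dilates used in the proof of Theorem~\ref{Thm_Sum-Sym} are already (degenerate) linear images of any fixed $K_0$, the ``moreover'' follows with no extra work.
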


The non symmetric case is not as simple. In this case we know that for
every $E\in G_{n,n-1}$ there exists a point $x_E\in E$, such that $E\cap
A \subseteq E\cap B+x_E$. However, this does not imply that there exists
a point $x\in\R^n$ such that $A\subseteq B+x$, as shown in the following
example:
\begin{exm}
There exist $A,B\in\K^2$, such that for every line $E$ passing through
the origin, the interval $B\cap E$ is longer than the interval $A\cap E$,
and yet no translation of $B$ contains $A$.

The construction is based on (the dual to) the Reuleaux triangle $R$,
a planar body of constant width: $\forall u\in S^1,\, w_R(u)=
h_R(u)+h_R(-u)=2$. In other words, the projection of $R$ to $u^\perp$
is an interval of length $2$, say $P_{u^\perp}(R)=[\alpha-2,\alpha]$
for some $\alpha=\alpha(u)\in(0,2)$. Then: $$
 |R^\circ\cap u^\perp|= |(P_{u^\perp}(R))^\circ|=
 |    [      1/(\alpha-2),      1 /\alpha     ]|\ge2=
 |D_2\cap u^\perp|.$$ However, no translation of $R^\circ$ contains $D_2$.
\end{exm}

Although Corollary \ref{Cor_SymmSections} may not be extended to non
symmetric bodies, one can show the following fact, in the same spirit:
\begin{thm}\label{thm-NonSymmSections}
Let $n\ge 2$ and $A,B \in \K^n$. If for all $K \in \K^n$ and $E\in G_{n,n-1}$ one has
\begin{equation} \label{eq-Sections}
|E\cap(A+K)|\le |E\cap(B+K)|,
\end{equation}
then $A - A \subset B - B$. In particular, there exist $x_A,x_B\in\R^n$
such that: $$A+x_A\subseteq B-B\subset (n+1)(B+x_B).$$
\end{thm}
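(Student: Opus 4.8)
The plan is to concentrate all the work on the single inclusion $A-A\subseteq B-B$, from which both displayed consequences follow formally. Once it is known, choosing any $a_0\in A$ gives $A-a_0\subseteq A-A\subseteq B-B$, so $x_A=-a_0$ settles the left inclusion. For the right one I would translate $B$ so that its centroid lies at the origin, i.e. take $x_B$ to be minus the centroid of $B$; then, by the classical fact that a convex body $C$ centered at its centroid satisfies $-C\subseteq nC$, I get $B-B=(B+x_B)-(B+x_B)\subseteq (B+x_B)+n(B+x_B)=(n+1)(B+x_B)$, using that difference bodies are translation invariant and that $C+nC=(n+1)C$ for convex $C$.

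To obtain $A-A\subseteq B-B$ I would use that both sides are convex and symmetric about the origin, so the inclusion is equivalent to longest-chord domination: for every direction $v$ the longest chord of $A$ parallel to $v$ is no longer than the longest chord of $B$ parallel to $v$. The crucial step is a reduction to dimension $n-1$ that upgrades the hypothesis into an inclusion of \emph{parallel sections}. Fix a unit vector $u$, set $E=u^{\perp}\in G_{n,n-1}$, and feed \eqref{eq-Sections} bodies of the form $K=K_0-su$ with $K_0\subset E$ (made non-degenerate by an arbitrarily thin thickening, then pass to the limit) and $s\in\R$. A direct computation identifies the central section with a parallel section: writing $H_s=\{x:\langle x,u\rangle=s\}$ one has $E\cap(A+K_0-su)=(A\cap H_s)+K_0$, and the same for $B$. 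Thus \eqref{eq-Sections} becomes $|(A\cap H_s)+K_0|\le|(B\cap H_s)+K_0|$ inside $E\cong\R^{n-1}$, for every $s$ and every $K_0$. Applying Theorem \ref{Thm_Sum-Gen} in the $(n-1)$-dimensional hyperplane $H_s$ then yields, for every affine hyperplane $H$, a vector $x_H$ with $A\cap H\subseteq(B\cap H)+x_H$.

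With parallel-section inclusion available, longest-chord domination is immediate. Given $v$, I would pick a longest chord $S$ of $A$ in direction $v$ and any affine hyperplane $H$ containing the line through $S$; then $S\subseteq A\cap H\subseteq(B\cap H)+x_H$, so $S-x_H$ is a chord of $B$ in direction $v$ of the same length. Hence $B$ has a chord parallel to $v$ at least as long as the longest such chord of $A$, for every $v$, which is precisely $A-A\subseteq B-B$.

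The hard part is the dimension-reduction step, and specifically the passage from central hyperplanes to \emph{all} affine hyperplanes. Translating $K$ off $E$ is exactly what converts a central section into an arbitrary parallel section, and this is indispensable: central sections alone do not control the difference body. Already in the plane a long chord of $A$ placed far from the origin is invisible to every line through the origin, so one can dominate all central chords of $A$ by those of a tiny disk $B$ while $A-A\not\subseteq B-B$; only the full strength of \eqref{eq-Sections}, through translation of $K$, rules this out. The remaining points are routine: verifying the section identity, legitimizing the limit from thickened to flat $K_0$ so that Theorem \ref{Thm_Sum-Gen} genuinely applies in $\R^{n-1}$, and handling the offsets $s$ at which the section degenerates.
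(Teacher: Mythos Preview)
Your argument is correct, but it is genuinely different from the paper's. The paper never passes to parallel sections or to dimension $n-1$. Instead it substitutes $K\mapsto -A+K$ and $K\mapsto -B+K$ in \eqref{eq-Sections} to obtain
\[
|E\cap(A-A+K)|\le|E\cap(B-A+K)|,\qquad |E\cap(A-B+K)|\le|E\cap(B-B+K)|,
\]
observes the reflection identity $|E\cap(B-A+K)|=|E\cap(A-B-K)|$, and chains the two inequalities for \emph{symmetric} $K$ to get $|E\cap((A-A)+K)|\le|E\cap((B-B)+K)|$. Since $A-A$ and $B-B$ are centrally symmetric, Corollary~\ref{Cor_SymmSections} then gives $A-A\subseteq B-B$ directly.

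Your route trades this two-line trick for a reduction of dimension: translating $K$ off $E$ turns \eqref{eq-Sections} into $|(A\cap H_s)+K_0|\le|(B\cap H_s)+K_0|$ for every affine hyperplane $H_s$ and every $K_0$ in it, after which Theorem~\ref{Thm_Sum-Gen} in $\R^{n-1}$ (hence Lutwak) yields $A\cap H\subseteq (B\cap H)+x_H$, and the radial (longest-chord) description of $A-A$ finishes. This is longer and leans on the heavier Theorem~\ref{Thm_Sum-Gen} rather than the elementary symmetric Corollary~\ref{Cor_SymmSections}, and it requires the bookkeeping you flag (the displayed identity should read $E\cap(A+K_0-su)=(A\cap H_s)+K_0-su$, and one must note that whenever $A\cap H_s$ is full-dimensional the inequality forces $B\cap H_s$ to be full-dimensional as well, so Theorem~\ref{Thm_Sum-Gen} applies; the endpoint $s$ are handled by a Hausdorff limit). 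On the other hand, your approach yields the stronger intermediate statement that every hyperplane section of $A$ sits inside a translate of the parallel section of $B$, which the paper's proof does not produce. The ``in particular'' clause is handled identically in both approaches.
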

\begin{proof}
Apply condition (\ref{eq-Sections}) for convex sets of the form
$-A + K$ to get: $$
|E \cap (A-A + K)| \leq |E \cap (B-A + K)|. $$
Then apply condition (\ref{eq-Sections}) for convex sets of the form
$-B + K$: $$
|E \cap (A-B + K)| \leq |E \cap (B-B + K)|. $$
Since $|E \cap (B-A + K)|=|E \cap (A-B - K)|$, we get for all {\sl
symmetric} $K\in\K^n$: $$
|E \cap (A-A + K)| \leq |E \cap (B-B + K)|, $$
which by Corollary \ref{Cor_SymmSections} implies that $A - A \subset
B-B$. The theorem now follows, since $A-A$ contains a translate of $A$,
and $nB$ contains a translate of $-B$ (as shown by Minkowski. See, e.g., Bonnesen and
Fenchel \cite{Bon-Fen}, $\S7$, 34, pp. 57-58 for a proof).
\end{proof}

We also consider the projection version of Theorem \ref{thm-NonSymmSections}:
\begin{thm} Let $n\ge 2$ and $A, B\in \K^n$. If for all $K\in \K^n$ and
$E\in G_{n,n-1}$ one has
\begin{equation} \label{eq-Projections}
|P_E(A + K)| \leq |P_E(B+K)|,
\end{equation}
then there exists $x_0 \in \R^n$ such that $A+x_0 \subset 2 
B$.
\end{thm}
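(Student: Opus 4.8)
The plan is to mimic the structure of the proof of Theorem \ref{thm-NonSymmSections}, replacing sections with projections and replacing Corollary \ref{Cor_SymmSections} with the projection analogue of Theorem \ref{Thm_Sum-Sym}. First I would record the symmetric projection statement I intend to invoke: for centrally symmetric $A,B\in\K^n$, if $|P_E(A+K)|\le|P_E(B+K)|$ for all $K\in\K^n$ and all $E\in G_{n,n-1}$, then $A\subseteq B$. This follows from the argument of Theorem \ref{Thm_Sum-Sym} applied inside the hyperplane $E$: taking $K=rK_0$ with $K_0$ a flat $(n-2)$-dimensional ball lying in $E$ and letting $r\to\infty$, the leading coefficient of $|P_E(A+rK_0)|$ is a multiple of the width $w_{P_E A}(u)=w_A(u)$ for $u\in E$, so one recovers $h_A\le h_B$ on all of $S^{n-1}$ and hence $A\subseteq B$ by symmetry.

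Next I would symmetrize. Applying (\ref{eq-Projections}) to sets of the form $-A+K$ and then to sets of the form $-B+K$, and using the projection identity $|P_E(B-A+K)|=|P_E(A-B-K)|$ (valid because projection volume is invariant under the reflection $x\mapsto -x$), I expect to obtain, for every symmetric $K\in\K^n$ and every $E\in G_{n,n-1}$,
$$|P_E(A-A+K)|\le |P_E(B-B+K)|.$$
Since $A-A$ and $B-B$ are centrally symmetric, the symmetric projection statement above yields $A-A\subseteq B-B$. Finally, since $A-A$ contains a translate of $A$ and $-B\subseteq nB$ up to translation (the Minkowski fact cited in Theorem \ref{thm-NonSymmSections}), I would chain inclusions: a translate of $A$ sits inside $A-A\subseteq B-B=B+(-B)$, and $B+(-B)$ is contained, up to translation, in $B+nB=(n+1)B$ up to a dilate; tightening the constant to land at $2B$ will require care.

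The main obstacle I anticipate is the final constant $2$ rather than $n+1$. The crude bound via $A-A\subseteq B-B$ and $-B\subseteq nB$ only gives a factor like $n+1$, as in Theorem \ref{thm-NonSymmSections}, so to reach $A+x_0\subseteq 2B$ I expect the proof must avoid passing through the full difference body $B-B$. A natural fix is to symmetrize only $A$ and not $B$: applying (\ref{eq-Projections}) with $K\mapsto -A+K$ gives $|P_E(A-A+K)|\le|P_E(B-A+K)|$, and combining this with the reflection identity should yield a comparison between $\tfrac12(A-A)$ and $B$ directly, since $\tfrac12(A-A)$ is the symmetric body whose support function is $\tfrac12 w_A$. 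I would then argue that $A$ itself, suitably translated, is contained in $A-A$, which is a translate of $2\cdot\tfrac12(A-A)$, and that $\tfrac12(A-A)\subseteq B$ up to translation; this produces $A+x_0\subseteq 2B$. Verifying that the symmetric comparison really lands on $\tfrac12(A-A)$ versus $B$ (as opposed to $B-B$) is the delicate point, and is where I would spend the most effort.
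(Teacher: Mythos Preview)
Your approach has a genuine gap at precisely the point you flag as delicate, and the hoped-for intermediate step is in fact false. The symmetrization route can only reach $A-A\subseteq B-B$: after applying the hypothesis with $K\mapsto -A+K$ to get $|P_E(A-A+K)|\le|P_E(B-A+K)|$, any further use of the hypothesis or of the reflection identity inevitably replaces $B-A$ by $B-B$, because the assumed inequality only lets you enlarge the side containing $A$. There is no move that compares the symmetric body $\tfrac12(A-A)$ directly with the non-symmetric $B$; your ``symmetric projection statement'' needs both bodies symmetric. Worse, the claim $\tfrac12(A-A)\subseteq B+x$ fails already for $A=B$ a triangle in $\R^2$: the body $\tfrac12(B-B)$ has the same width as $B$ in every direction but is not a translate of $B$, so neither contains a translate of the other. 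Hence the best your method can yield is the constant $n+1$, exactly as in Theorem~\ref{thm-NonSymmSections}.

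The paper's proof takes a completely different and much shorter route, exploiting a feature projections have and sections lack: linearity. Since $P_E(A+K)=P_E A+P_E K$ and $K\mapsto P_E K$ is onto the convex bodies in $E$, the hypothesis says that for each fixed $E$ and every $(n{-}1)$-dimensional body $K'\subset E$ one has $|P_E A+K'|\le|P_E B+K'|$. Applying Theorem~\ref{Thm_Sum-Gen} inside $E$ produces $x_E\in E$ with $P_E A\subseteq P_E B+x_E$. The conclusion then follows from the shadow-covering theorem of Chen, Khovanova and Klain \cite{Klain}: if every hyperplane projection of $A$ is covered by a translate of the corresponding projection of $B$, then $A+x_0\subseteq\tfrac{n}{n-1}B\subseteq 2B$ for some $x_0$. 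The constant $2$ (indeed $\tfrac{n}{n-1}$) comes entirely from this external result, which is the key idea missing from your proposal.
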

\begin{proof}
Since projection is a linear operator, $P_E(A + K) = P_E(A) + P_E(K)$.
Due to surjectivity we may rewrite condition (\ref{eq-Projections}) as
follows. For every $n-1$ dimensional subspace $E$ and for every convex
body $K' \subset E$ we have: $$|P_E(A) + K'| \leq |P_E(B) + K'|.$$
Theorem \ref{Thm_Sum-Gen} implies that there exists a shift $x_E \in E$
such that $P_E(A) \subseteq P_E(B) + x_E$. By a result of Chen,
Khovanova, and Klain (see \cite{Klain}) there exists $x_0 \in \R^n$
such that:
$$ A+x_0 \subseteq \frac{n}{n-1}B\subseteq 2B.$$
Note that, while the dimension free constant equals $2$, we have in
fact seen the dimension dependent constant $\frac{n}{n-1}$, which tends
to $1$ when $n\to\infty$.
\end{proof}

Let us formulate a problem which arises from Theorem \ref{thm-NonSymmSections}: \\
{\bf Problem A:} Let $A, B\in\K^n$ such that $0 \in int(A\cap B)$.
Assume that for every $n-1$ dimensional subspace $E \in G_{n,n-1}$ there
exists $x_E \in \R^n$ such that $A \cap E + x_E \subset B \cap E$. Does
there exist a universal constant $C > 0$ such that $A + x_0 \subset C B$
for some $x_0 \in \R^n$? Of course, we are interested in $C$ independent
of the dimension. We suspect that $C = 4$ suffices.

\section{Comparing n-tuples - Affine case}\label{Sec_ntuples-AFF}
Although very little may be said about inclusion of the convex body in
another one using only affine images of those bodies, some information is
anyway available through the use of mixed volumes.

\begin{thm} \label{thm-Identify1}
Let $A, B, K_2, \ldots, K_n\in\K^n$ be centrally symmetric bodies. If
for all $u \in SL_n$ one has
\begin{equation} \label{eq-Identify1}
V(uA, K_2, \ldots, K_n) \leq V(uB, K_2, \ldots, K_n),
\end{equation} then $A \subseteq B$.
\end{thm}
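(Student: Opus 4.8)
My plan is to reduce the inclusion $A \subseteq B$ to the pointwise inequality of support functions $h_A \le h_B$ on $S^{n-1}$, and then, assuming this fails in some direction, to exhibit a \emph{single} map $u \in SL_n$ violating \eqref{eq-Identify1}. The tool is the standard integral representation of the mixed volume \cite{Schneider_Book}: writing $S(K_2,\dots,K_n;\cdot)$ for the mixed area measure of $K_2,\dots,K_n$, one has
\[ V(uA, K_2, \dots, K_n) = \frac1n \int_{S^{n-1}} h_{uA}(v)\, dS(K_2,\dots,K_n;v) = \frac1n\int_{S^{n-1}} h_A(u^T v)\, dS(K_2,\dots,K_n;v), \]
using $h_{uA}(v) = h_A(u^T v)$. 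So I only need to choose $u$ making the $A$-integral exceed the $B$-integral.

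Suppose $A \not\subseteq B$; then there is a unit vector $v_0$ with $h_A(v_0) > h_B(v_0)$. Let $P_0$ be the orthogonal projection onto $\span\{v_0\}$ and set, for $\lambda \ge 1$,
\[ u_\lambda = \lambda^{n-1}P_0 + \lambda^{-1}(\Id - P_0). \]
This map is symmetric, positive definite, and $\det u_\lambda = \lambda^{n-1}\cdot\lambda^{-(n-1)} = 1$, so $u_\lambda \in SL_n$ and $u_\lambda^T = u_\lambda$. Writing $v = v_1 v_0 + v'$ with $v_1 = \iprod{v}{v_0}$ and $v' \perp v_0$, we have $\lambda^{-(n-1)}u_\lambda v = v_1 v_0 + \lambda^{-n}v' \to v_1 v_0$, whence by continuity and central symmetry of $A$,
\[ \frac{1}{\lambda^{n-1}} h_A(u_\lambda v) = h_A\!\left(v_1 v_0 + \lambda^{-n}v'\right) \longrightarrow |v_1|\, h_A(v_0) \qquad (\lambda\to\infty). \]
Since $|v_1 v_0 + \lambda^{-n}v'|\le |v|=1$, these integrands are bounded by the circumradius of $A$, so dominated convergence against the finite measure $S(K_2,\dots,K_n;\cdot)$ gives
\[ \lim_{\lambda\to\infty}\frac{V(u_\lambda A, K_2,\dots,K_n)}{\lambda^{n-1}} = \frac{h_A(v_0)}{n}\int_{S^{n-1}}|\iprod{v}{v_0}|\, dS(K_2,\dots,K_n;v) =: \frac{c}{n}\, h_A(v_0), \]
and likewise for $B$ with the same constant $c$.

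It remains to check $c > 0$. Setting $I = [-v_0, v_0]$, one has $h_I(v) = |\iprod{v}{v_0}|$, so $c = n\,V(I, K_2, \dots, K_n)$; this mixed volume is positive because each nondegenerate body $K_i$ contains a segment whose direction, together with $v_0$, can be chosen to make the resulting $n$ segment-directions linearly independent. Consequently
\[ \lim_{\lambda\to\infty}\frac{V(u_\lambda A, K_2,\dots,K_n) - V(u_\lambda B, K_2,\dots,K_n)}{\lambda^{n-1}} = \frac{c}{n}\bigl(h_A(v_0) - h_B(v_0)\bigr) > 0, \]
so for all large $\lambda$ the map $u_\lambda \in SL_n$ violates \eqref{eq-Identify1}, a contradiction; hence $A \subseteq B$. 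The step I expect to require the most care is the passage to the limit: one must confirm that the part of the measure $S(K_2,\dots,K_n;\cdot)$ carried near the equator $\{\iprod{v}{v_0}=0\}$ contributes only $O(\lambda^{-1})$ and is harmless, and that the central symmetry of $A$ and $B$ is genuinely used to collapse the contributions from $v_0$ and $-v_0$ into the single quantity $h_A(v_0)-h_B(v_0)$ (without symmetry the $-v_0$ term could carry the wrong sign).
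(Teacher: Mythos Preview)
Your proof is correct. Both your argument and the paper's rest on the same idea---degenerate to a one-dimensional situation and compare widths---but the executions differ. The paper first moves the transform to the other slots via $V(uA,K_2,\dots,K_n)=|\det u|\,V(A,u^{-1}K_2,\dots,u^{-1}K_n)$, then lets $u^{-1}$ tend to the orthogonal projection $P_E$ onto $E=v^\perp$ and invokes Hausdorff continuity of mixed volumes to obtain $w_A(v)\,V_{n-1}(P_EK_2,\dots,P_EK_n)\le w_B(v)\,V_{n-1}(P_EK_2,\dots,P_EK_n)$. You instead keep the transform on $A$ and stretch along $v_0$; in effect $\lambda^{-(n-1)}u_\lambda A\to h_A(v_0)\,[-v_0,v_0]$ in the Hausdorff metric, which already gives your limit $V(h_A(v_0)I,K_2,\dots,K_n)=h_A(v_0)\cdot\tfrac{c}{n}$ by continuity alone, so the integral representation and dominated convergence, while valid, are more machinery than you need. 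Your worry about the equatorial mass is unfounded: dominated convergence handles it automatically, since the pointwise limit $|v_1|h_A(v_0)$ simply vanishes there. Your use of central symmetry is exactly right (it turns $h_A(v_1v_0)$ into $|v_1|h_A(v_0)$); without it the limit would involve the width $h_A(v_0)+h_A(-v_0)$ rather than $2h_A(v_0)$, and width comparison alone does not force inclusion up to translation.
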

\begin{proof}
Due to multilinearity of mixed volume, (\ref{eq-Identify1}) holds
for any $u \in GL_n$ as well. Since $V(uK_1, \ldots, uK_n)= |\det(u)|
V(K_1, \ldots, K_n)$, we get for every $u \in GL_n$: $$
V(A, uK_2, \ldots, uK_n) \leq V(B, uK_2, \ldots, uK_n).$$
Fix a direction $v\in S^{n-1}$, and let $E=v^\perp$. We may choose a
sequence $\{u_n\} \subset GL_n$ such that for all $K\in\K^n$, $u_n(K)
\rightarrow P_E(K)$ in the Hausdorff metric. By continuity of mixed
volume with respect to the Hausdorff metric we get that: $$
V(A, P_E K_2, \ldots, P_E K_n) \leq V(B, P_E K_2, \ldots, P_E K_n),\,
\mbox{that is:}$$ 
$$w_A(v)\cdot V(P_E K_2, \ldots, P_E K_n) \le
  w_B(v)\cdot V(P_E K_2, \ldots, P_E K_n). $$
In the last inequality, $V$ stands for the $n-1$ dimensional mixed
volume, and since $K_i$ have non empty interior, it does not vanish.
Thus for all $v\in S^{n-1}$, $w_A(v) \leq w_B(v)$,  which implies
inclusion, since the bodies are symmetric.
\end{proof}

\begin{thm}\label{thm-tuplesLinearSeparation}
Let $A_1,B_1,\dots,A_n,B_n\in\K^n$ be centrally symmetric bodies.
If for all $u_1,\dots,u_n \in SL_n$ one has
\begin{equation}\label{eq-Identify2}
V(u_1A_1, \dots, u_n A_n) \le V(u_1 B_1, \dots, u_n B_n),
\end{equation}
then there exist positive constants $t_1, \dots, t_n$ such that
$\Pi_1^n t_i = 1$ and $A_i \subset t_i B_i$.
\end{thm}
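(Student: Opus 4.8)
The plan is to reduce the statement to a single family of inequalities among support functions, one per body, and then read off the dilation factors. Set $t_i=\max_{v\in S^{n-1}} h_{A_i}(v)/h_{B_i}(v)$; this is finite and positive, since each $B_i$ is a symmetric body (so $h_{B_i}>0$ on $S^{n-1}$) and the maximum is attained by compactness. Because $A_i$ and $B_i$ are centrally symmetric, $t_i$ is exactly the least factor with $A_i\subseteq t_iB_i$. Hence the whole theorem reduces to the single scalar inequality $\prod_{i=1}^n t_i\le 1$: once this is known, replacing $t_i$ by $t_i\,(\prod_j t_j)^{-1/n}\ge t_i$ preserves the inclusions $A_i\subseteq t_iB_i$ and forces the product to equal $1$.

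First I would upgrade the hypothesis. The inequality (\ref{eq-Identify2}) is unchanged under $u_i\mapsto\lambda_i u_i$ with $\lambda_i>0$, since both sides scale by $\prod_i\lambda_i$ by the homogeneity of mixed volume in each slot; as every $g\in GL_n$ with $\det g>0$ factors as $g=(\det g)^{1/n}\,w$ with $w\in SL_n$, the assumption in fact holds for all $u_1,\dots,u_n\in GL_n^+$. This is what lets me feed in strongly non-volume-preserving maps.

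The key step is a degeneration to mutually orthogonal segments. Fix arbitrary directions $v_1,\dots,v_n\in S^{n-1}$. For each $i$ choose $R_i\in SO(n)$ with $R_iv_i=e_i$ and set $u_i^{(k)}=D^{(i)}_{\varepsilon_k}R_i$, where $D^{(i)}_\varepsilon$ fixes $\R e_i$ and scales $e_i^\perp$ by $\varepsilon$, with $\varepsilon_k\to 0$; then $u_i^{(k)}\in GL_n^+$. As $\varepsilon_k\to 0$ the body $u_i^{(k)}A_i$ flattens, in the Hausdorff metric, onto its projection to $\R e_i$, namely the segment $S_i^A=[-h_{A_i}(v_i)e_i,\,h_{A_i}(v_i)e_i]$ (using $R_i^{-1}e_i=v_i$), and likewise $u_i^{(k)}B_i\to S_i^B=[-h_{B_i}(v_i)e_i,\,h_{B_i}(v_i)e_i]$. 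Applying the upgraded hypothesis to $u_1^{(k)},\dots,u_n^{(k)}$ and passing to the limit by continuity of mixed volume gives $V(S_1^A,\dots,S_n^A)\le V(S_1^B,\dots,S_n^B)$. Since the segments lie along the orthonormal axes, the mixed volume of segments evaluates to $V(S_1^A,\dots,S_n^A)=\frac{2^n}{n!}\prod_i h_{A_i}(v_i)$, and similarly for $B$, so the inequality collapses to $\prod_i h_{A_i}(v_i)/h_{B_i}(v_i)\le 1$.

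Because the directions $v_1,\dots,v_n$ were chosen independently, I may take each $v_i$ to maximize $h_{A_i}/h_{B_i}$, which yields $\prod_i t_i\le 1$ and finishes the argument as above. The step I expect to carry the weight is the degeneration itself: one must engineer the $n$ maps so that the bodies collapse onto \emph{mutually orthogonal} lines simultaneously, as this is what converts one mixed-volume inequality into a product of one-dimensional widths. The two points needing care are the legitimacy of using the degenerating (non-$SL_n$) maps, secured by the homogeneity upgrade to $GL_n^+$, and the fact that the product of the separate maxima is realized by a single admissible tuple $(v_1,\dots,v_n)$ precisely because the directions decouple across the $n$ slots.
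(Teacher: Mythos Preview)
Your proof is correct and follows essentially the same approach as the paper: both arguments degenerate the maps $u_i$ so that each $u_iA_i$ (resp.\ $u_iB_i$) collapses onto a segment along a distinct coordinate axis, use continuity of mixed volume to pass to the limit, and then read off the product inequality $\prod_i h_{A_i}(v_i)\le \prod_i h_{B_i}(v_i)$ from the mixed volume of orthogonal segments. The only cosmetic differences are that the paper applies the degenerate projections directly (invoking continuity in one line) and fixes the extremal directions for $i\ge 2$ while letting $v_1$ vary to deduce $t_1A_1\subseteq B_1$ outright, whereas you take all $v_i$ extremal at once to obtain $\prod t_i\le 1$ and then renormalize.
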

\begin{proof} For $2\le i\le n$, denote: $$
t_i = \max_{t > 0}\{t : \quad tA_i \subseteq B_i\},$$
and let $v_i \in S^{n-1}$ be such that $t_i h_{A_i}(v_i) = h_{B_i}(v_i)$.
Since we may rotate the bodies $A_i$ separately as desired, we may
assume without loss of generality that $v_i=e_i$. Let $v\in S^{n-1}$,
let $g$ be a rotation such that $g(v)=e_1$, and denote $t_1=
(t_2 \dots t_n)^{-1}$. Due to continuity of mixed volume,
(\ref{eq-Identify2}) holds also for degenerate $u_i$. Applying it to
$g\circ P_v,P_{v_2},\dots, P_{v_n}$ (where $P_w$ denotes the orthogonal
projection onto the $1$-dimensional subspace spanned by $w$) yields:
\begin{eqnarray*}
2^n h_{A_1}(v) \cdot h_{A_2}(v_2) \cdot \ldots \cdot h_{A_n}(v_n)
&=&
V(g P_v A_1,P_{v_2} A_2, \dots, P_{v_n} A_n)
 \\ \\ &\le&
V(g P_v B_1,P_{v_2} B_2, \dots, P_{v_n} B_n)
\\ \\ &=&
2^n h_{B_1}(v) \cdot h_{B_2}(v_2) \cdot \ldots \cdot h_{B_n}(v_n)
\\ \\ &=&
\frac{2^n}{t_1} h_{B_1}(v) \cdot h_{A_2}(v_2) \cdot \ldots \cdot h_{A_n}(v_n).
\end{eqnarray*}
This implies $h_{t_1A_1}(v)\le h_{B_1}(v)$. Since $v$ was arbitrary,
$t_1A_1 \subseteq B_1$. This completes the proof, since for $i\ge 2$,
$t_iA_i\subseteq B_i$ by the definition of $t_i$, and
$\Pi_{i=1}^n t_i=1$. \qed

\begin{cor} \label{cor-2}
Let $A, B\in\K^n$ be centrally symmetric bodies. If for all $u_i\in
SL_n$ one has $$
V(u_1A, \ldots, u_nA) \le V(u_1B, \ldots, u_n B),$$
then $A \subseteq B$.
\end{cor}
\end{proof}

\begin{rem}
Theorem \ref{thm-tuplesLinearSeparation} is not true if the bodies are
not assumed to be centrally symmetric. One example in $\R^2$ is given by
the bodies $K_1=R$, the Reuleaux triangle, and $K_2=L_1=L_2=D_2$, the
Euclidean unit ball. In fact for any $n\ge2$, the choice $K_1=-L_1=A$ and
$L_i=K_i=S_i$ yields a counter example, for trivial reasons, provided that
$A\not=-A$, and $S_i=-S_i$, for $i\ge 2$.
\end{rem}

Let us mention a problem inspired by Corollary \ref{cor-2}. \\
{\bf Problem B:} Let $K, L$ be convex bodies with barycenter at the origin.  Denote
\[
a(K, L) = \sup_{x\neq 0} \frac{h_K(x)}{h_L(x)}.
\]
Assume that for any $n-1$ dimensional subspace $E$ we have
\[
a(K,L)|P_E(K)| \leq |P_E(L)|.
\]
Does this imply that $|K| \leq |L|$?

\section{Comparing $n$-tuples - Fractional linear case}\label{Sec_ntuples-FL}
Let us denote by $K_{(0)}^n$ the subset of $\K^n$ of bodies with $0$ in their interior. We have:
\begin{thm}
Let $n\ge1$ and let $K_1,\dots,K_n,L_1,\dots,L_n\in\K_{(0)}^n$ such
that for every $\lambda_1,\dots,\lambda_n>0$ and every admissible (for
$\lambda_1 K_1, \dots, \lambda_n K_n$) $F\in FL_n$ one has
$$V(F(\lambda_1K_1),\dots,F(\lambda_nK_n))\le
V(F(\lambda_1L_1),\dots,F(\lambda_nL_n)).$$
Then for every $1\le i\le n$, $K_i\subseteq L_i$.
\end{thm}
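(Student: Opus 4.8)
The plan is to argue by contradiction, exploiting the symmetry of the mixed volume together with the divergence of Jacobians encoded in Lemma \ref{Lem_FL-Diverg}. Since $V$ is symmetric in its arguments, it suffices to prove $K_1\subseteq L_1$; the remaining inclusions follow by permuting the roles of the indices. So suppose $K_1\not\subseteq L_1$. I would first record that, as every $K_i,L_i$ contains $0$ in its interior, there are radii $0<r_0<R_0$ with $r_0 D_n\subseteq K_i\cap L_i$ and $K_i\cup L_i\subseteq R_0 D_n$ for all $i$. The scaling parameters $\lambda_i$ are reserved for shrinking the bodies with $i\ge 2$ toward $0$, so that their projective images are governed by the single linear map $dF(0)$, while the parameter $\varepsilon$ (hence the map $F$) creates a large gap in the first slot.

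Next I would set up a two-sided estimate. Fix $\lambda_1=1$. Given $\varepsilon>0$, apply Lemma \ref{Lem_FL-Diverg} to the pair $K_1\not\subseteq L_1$ to obtain an admissible $F$ with $F(L_1)\subseteq\varepsilon D_n$ and a translate of $D_n$ inside $F(K_1)$. For $i\ge 2$ the inclusions $\lambda_i r_0 D_n\subseteq \lambda_i K_i$ and $\lambda_i L_i\subseteq \lambda_i R_0 D_n$ are preserved by $F$, and since a projective position of an ellipsoid is again an ellipsoid, $F(\lambda_i r_0 D_n)$ and $F(\lambda_i R_0 D_n)$ are ellipsoids, each containing, resp.\ contained in, a translate of a Euclidean ball. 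Writing $\rho_i^K$ for the inradius of $F(\lambda_i K_i)$ and $\rho_i^L$ for the circumradius of $F(\lambda_i L_i)$, monotonicity and translation invariance of mixed volume together with the identity $V(s_1 D_n,\dots,s_n D_n)=s_1\cdots s_n\,|D_n|$ turn the hypothesis $V(F(\lambda_1K_1),\dots)\le V(F(\lambda_1L_1),\dots)$ into the scalar inequality
\[
\prod_{i=2}^n \rho_i^K \;\le\; \varepsilon\,\prod_{i=2}^n \rho_i^L ,
\]
because $\rho_1^K\ge 1$ while $\rho_1^L\le\varepsilon$. Letting $\lambda_i\to 0$ for $i\ge 2$ (with $F$ fixed) linearizes $F$ near $0$: the ellipsoids become $\lambda_i\,dF(0)(r_0 D_n)$ and $\lambda_i\,dF(0)(R_0 D_n)$ up to translation, the factors $\lambda_i$ cancel, and the inequality passes to the limit
\[
\Bigl(\tfrac{r_0}{R_0\,\kappa}\Bigr)^{\!n-1}\le \varepsilon ,
\]
where $\kappa$ is the condition number (ratio of largest to smallest singular value) of the invertible linear map $dF(0)$.

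The crux---and the step I expect to be the main obstacle---is that the map $F$, and hence $\kappa$, depends on $\varepsilon$: a priori the divergent maps produced as $\varepsilon\to 0$ could become arbitrarily anisotropic at $0$, so that $\kappa\to\infty$ and the displayed inequality carries no information (already for $K_i=L_i=D_n$ the ratio $\rho_i^K/\rho_i^L$ degenerates like $1/\kappa$). The resolution is to build the divergent family so that $\kappa$ stays bounded. Here I would keep the enclosing ball of $L_1$ (which contains $0$) normalized to a fixed position $\E_{R_0,R_0,d_0}$ at a fixed positive projective distance from the defining hyperplane; on that fixed region the differential of the canonical form $F_0$ has condition number bounded independently of $\varepsilon$ (it is $-\tfrac1s\,\mathrm{Id}$ plus a bounded rank-one term). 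The blow-up in the first slot is instead produced by choosing inside $K_1$ balls whose radius is comparable to their shrinking distance to the hyperplane, which by Fact \ref{Fact_FL-of-Balls} still forces $F_0(K_1)$ to grow without bound, and by absorbing the remaining divergence into an \emph{isotropic} rescaling $S=(\varepsilon/R_0)\mathrm{Id}$, which leaves every condition number unchanged.

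Consequently $\kappa\le\kappa_0$ for a constant $\kappa_0$ independent of $\varepsilon$. Choosing $\varepsilon<(r_0/(R_0\kappa_0))^{n-1}$ and then $\lambda_2,\dots,\lambda_n$ small enough contradicts the limiting inequality, proving $K_1\subseteq L_1$ and, by symmetry, the theorem. (For $n=1$ the product is empty and the contradiction $1\le\varepsilon$ is immediate, so no control of $\kappa$ is needed.)
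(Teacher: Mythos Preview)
Your argument is correct, and the overall contradiction strategy (separate $K_1\setminus L_1$ from $L_1$ by balls, push the $K_1$--ball toward the defining hyperplane, and read off a scalar inequality via monotonicity of mixed volumes) is exactly the paper's. The difference is in how you treat the slots $i\ge 2$, and here the paper's route is cleaner.

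You send $\lambda_i\to 0$ to linearize $F$ at $0$, which forces you to control the condition number $\kappa$ of $dF(0)$ uniformly in $\varepsilon$; you then have to re-open the construction behind Lemma~\ref{Lem_FL-Diverg} to arrange this. The paper avoids linearization entirely. It \emph{fixes} $\lambda_2,\dots,\lambda_n$ once and for all so that $\lambda_iK_i,\lambda_iL_i\subseteq D_n\subset D_{L_1}$, and picks $\varepsilon_0>0$ with $\varepsilon_0 D_n\subseteq\lambda_iK_i$ for all $i\ge 2$. Monotonicity then collapses the hypothesis to
\[
V\bigl(F(D_{K_1}),\,F(\varepsilon_0 D_n)[n-1]\bigr)\ \le\ \bigl|F(D_{L_1})\bigr|.
\]
Now the very same affine normalization and Fact~\ref{Fact_FL-of-Balls} that make $F(D_{K_1})$ blow up also give, for free, a \emph{fixed} lower bound on the inradius of $F(\varepsilon_0 D_n)$ and a \emph{fixed} upper bound on the circumradius of $F(D_{L_1})$: both balls sit at distance $>2$ from the defining hyperplane, independently of the parameter $\delta$ governing $D_{K_1}$. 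Letting $\delta\to 0$ yields the contradiction directly. In effect, the paper replaces your differential estimate at $0$ by the exact image under $F$ of one fixed ball through $0$---precisely what Fact~\ref{Fact_FL-of-Balls} computes---so no limiting argument and no condition-number bookkeeping are needed. Your resolution of the $\kappa$--issue is fine (the similarity $A$ and the isotropic $S$ do keep $\kappa$ bounded, since $A(0)$ stays at distance $\ge 2$ from the hyperplane), but it is machinery that the simpler bounding trick makes unnecessary.
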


\begin{proof} Assume the claim is false, for example $K_1 \not
\subseteq L_1$. Apply Remark \ref{Rem_Replace-By-Balls} for $K_1, L_1$
and select $H^+$ to contain $0$. Let $D_{K_1},D_{L_1}$ be the balls from
\ref{Rem_Replace-By-Balls} (note again that these balls are not
necessarily centered at $0$), and take a ball $D$ centered at the origin
such that $D \subset D_{L_1}$. Without loss of generality (by a correct
rescaling) we may assume that $D$ is the Euclidean unit ball $D_n$. Fix
$\lambda_2, \ldots, \lambda_n > 0$ such that $\lambda_i K_i, \lambda_i L_i$
are contained in the unit ball, and choose $\varepsilon_0$ such that
for every $2\le i\le n$: $$
\varepsilon_0 D_n \subseteq \lambda_i K_i\subseteq D_n,\qquad
\varepsilon_0 D_n \subseteq \lambda_i L_i\subseteq D_n.
$$
It follows that for every admissible $F$ we have:
\begin{equation}\label{Eq_Needed-to-Ref}
V(F(D_{K_1}),F(\varepsilon_0 D_n)[n-1])\le
V(F(D_{L_1})[n]).
\end{equation}
Without loss of generality, we may assume that the centers of the balls
$D_{K_1},\,\varepsilon_0D_n$, and $D_{L_1}$ are collinear (for example,
we may replace $D_{L_1}$ by a larger ball containing it, while keeping
the intersection $D_{L_1}\cap D_{K_1}$ empty). By applying an affine map
$A$ we may further assume that:$$
A(D_{K_1})           =\E_{1,1,\delta},\quad
A(D_{L_1})           =\E_{R,R,d},\quad
A(\varepsilon_0D_n)=\E_{\varepsilon_1,\varepsilon_1,d_1},$$
where $R,\varepsilon_1>0$, $d,d_1>2$, and $\delta>0$ is arbitrarily
small. By \ref{Fact_FL-of-Balls}, we have:
$$\frac{1}{\delta(\delta+2)}D_n\subset F_0AD_{K_1},\quad
\frac{\varepsilon_1}{d_1(d_1+2\varepsilon_1)}\subset
F_0A\varepsilon_0D_n,\quad
F_0AD_L\subset\frac{R}{d\sqrt{d+2R}}D_n.$$
Substituting $F=F_0A$ in (\ref{Eq_Needed-to-Ref}), and using
multilinearity of mixed volumes, we get:
$$
\frac{1}{\delta(\delta+2)} 
\left(\frac{\varepsilon_1}{d_1(d_1+2\varepsilon_1)}\right)^{n-1}\le
\left(\frac{R}{d\sqrt{d+2R}}\right)^n,$$
which is false for sufficiently small $\delta>0$. Thus
$K_1\subseteq L_1$, as required.
\end{proof}

\end{document}